\documentclass[final]{siamart1116}



\usepackage{lipsum}
\usepackage{amsfonts}
\usepackage{graphicx}
\usepackage{epstopdf}
\usepackage{algorithmic}
\ifpdf
  \DeclareGraphicsExtensions{.eps,.pdf,.png,.jpg}
\else
  \DeclareGraphicsExtensions{.eps}
\fi

\numberwithin{theorem}{section}

\newcommand{\TheTitle}{Positivity, monotonicity, and consensus on Lie Groups} 
\newcommand{\TheAuthors}{C. Mostajeran, and R. Sepulchre}

\headers{\TheTitle}{\TheAuthors}

\title{{\TheTitle}\thanks{This work was funded by the Engineering and Physical Sciences Research Council (EPSRC) of the United Kingdom, as well as the European Research Council under the Advanced ERC Grant Agreement Switchlet n.670645.}}

\author{
  Cyrus Mostajeran
  \thanks{Department of Engineering, University of Cambridge, United Kingdom (\email{csm54@cam.ac.uk}, \email{r.sepulchre@eng.cam.ac.uk}).}
    \and   Rodolphe Sepulchre \footnotemark[2]
}

\usepackage{amsopn}
\DeclareMathOperator{\diag}{diag}


\ifpdf
\hypersetup{
  pdftitle={\TheTitle},
  pdfauthor={\TheAuthors}
}
\fi


\externaldocument{ex_supplement}


\newtheorem{remark}{Remark}

\begin{document}

\maketitle

\begin{abstract}
Dynamical systems whose linearizations along trajectories are positive in the sense that they infinitesimally contract a smooth cone field are called differentially positive. The property can be thought of as a generalization of monotonicity, which is differential positivity in a linear space with respect to a constant cone field. Differential positivity places significant constraints on the asymptotic behavior of trajectories under mild technical conditions. This paper studies differentially positive systems defined on Lie groups. The geometry of a Lie group allows for the generation of invariant cone fields over the tangent bundle given a single cone in the Lie algebra. We outline the mathematical framework for studying differential positivity of 
discrete and continuous-time dynamics on a Lie group with respect to an invariant cone field and motivate the use of this analysis framework in nonlinear control, and, in particular in nonlinear consensus theory. We also introduce a generalized notion of differential positivity of a dynamical system with respect to an extended notion of cone fields generated by cones of rank $k$. This new property provides the basis for a generalization of differential Perron-Frobenius theory, whereby the Perron-Frobenius vector field which shapes the one-dimensional attractors of a differentially positive system is replaced by a distribution of rank $k$ that results in $k$-dimensional integral submanifold attractors instead. 
\end{abstract}

\begin{keywords}
Differential Analysis, Positivity, Monotone Systems, Nonlinear Spaces, Manifolds, Lie Groups, Consensus, Synchronization
\end{keywords}

\begin{AMS}
  34C12, 93C10, 34D06
\end{AMS}

\section{Introduction}

Positivity and monotonicity play an important role in the study of dynamical systems. Both concepts have been the subject of considerable interest in recent years because of their importance in the convergence analysis of consensus algorithms \cite{Moreau2004,Olfati-Saber2006,Olfati-Saber2007,Jadbabaie2003,Sepulchre2008} and in the modeling of biological systems \cite{Enciso2005,Angeli2008,Angeli2012}. While both concepts have been extensively studied in vector spaces, the present paper seeks to extend the theory to dynamical systems defined on Lie groups. An important motivation is to highlight the similar properties of consensus algorithms defined on vector spaces and on Lie groups  \cite{Sarlette2010, Tron2012}. While positivity is at the core of consensus algorithms studied in vector spaces, the concept has not been used so far in the convergence analysis of consensus algorithms defined on Lie groups, preventing the generalization of important results such as the consideration of inhomogeneous dynamics, asymmetric couplings, and time-varying network topologies in the problem formulation.\footnote{This manuscript further develops ideas partially covered in \cite{Mostajeran2016,Mostajeran2017}}

The proposed extension is through the concept of differential positivity, recently introduced in \cite{Forni2015}. Linear positive systems are systems that leave a cone invariant \cite{Bushell1973}. Such systems find many applications in control engineering, including to stabilization \cite{Muratori1991,Farina2000}, observer design \cite{Bonnabel2011}, and distributed control \cite{Moreau2004}. An important feature of positivity is that it restricts the behavior of a system, as seen in Perron-Frobenius theory. To illustrate these ideas, let the vector space $\mathcal{V}$ be the state space of the system and consider the linear dynamics $\dot{x}=Ax$ on $\mathcal{V}$. Such a system is said to be positive with respect to a pointed convex solid cone $\mathcal{K}\subseteq \mathcal{V}$ if 
$e^{At}\mathcal{K}\subseteq\mathcal{K}$,
for all $t>0$, where $e^{At}\mathcal{K}:=\{e^{At}x:x\in\mathcal{K}\}$. Perron-Frobenius theory demonstrates that if the system is strictly positive in the sense that the transition map $e^{At}$ maps the boundary of the cone $\mathcal{K}$  into its interior, then any trajectory $e^{At}x$, $x\in\mathcal{K}$ converges asymptotically to the subspace spanned by the unique dominant eigenvector of $A$. Differentially positive systems are systems whose linearizations  along trajectories are positive. Strictly differentially positive systems infinitesimally contract a cone field along trajectories, constraining the asymptotic behavior to be one-dimensional under suitable technical conditions.

The study of differential positivity of a system defined on a manifold necessitates the construction of a cone field which assigns to each point a cone that lies in the tangent space at that point. On highly symmetric spaces such as Lie groups, which are commonly found in applications, there is a strong incentive to make the differential analysis {\it invariant} by incorporating the symmetries of the state-space in the analysis. 
Monotone dynamical systems are defined as systems that preserve a partial order in vector spaces. An infinitesimal characterization of the monotonicity is differential positivity with respect to a constant cone field identified with the cone defined in the vector space and associated to the partial order. The infinitesimal characterization suggests a natural generalization to Lie groups, requiring differential positivity with respect to an invariant cone field.

We also consider systems that are positive with respect to so-called cones of rank $k\geq 2$. These structures are a generalization of classical solid convex cones and lead to a weakened notion of monotonicity. For linear systems that are strictly positive with respect to a cone of rank $k\geq 2$, the one-dimensional dominant eigenspace of Perron-Frobenius theory is replaced by an eigenspace of dimension $k$; see \cite{Fusco1991}.
Cones of rank $k\geq 2$ have been used to study the existence of periodic orbits via a Poincare-Bendixson property for a new class of monotone systems (as in \cite{smith1980,sanchez2009,sanchez2010}), including for cyclic feedback systems \cite{malletparet1990,malletparet1996}. Here we develop a differential analytic formulation of positivity with respect to a cone field of rank $k\geq 2$, which is applicable to the study of nonlinear systems. We discuss how the property leads to a generalization of differential Perron-Frobenius theory, whereby the attractors of the system are shaped by a distribution of rank $k$.

\subsection{Contributions} In short, the present paper exploits the concept of invariance in differential positivity and uses invariant differential positivity to generalize consensus theory from linear spaces to Lie groups. Specifically, the paper focuses on three original contributions. The first contribution is the formulation of invariant differential positivity on Lie groups. The second contribution is to further generalize the concept by considering higher rank cone fields, mimicking the corresponding generalization of monotone systems. The third contribution is to apply the proposed theory to consensus algorithms defined on the circle and to $SO(3)$, highlighting the benefits of a convergence theory rooted in positivity rather than in stability.

\subsection{Paper organization} We begin with a review of differential positivity and its relation to monotonicity and consensus in linear spaces. In Section \ref{Lie}, we study the geometry of cone fields and formulate invariant differential positivity on Lie groups. We also provide a generalization of existing results on linear positivity to nonlinear systems that are differentially positive with respect to cone fields of rank $k$. 
In Section \ref{circle}, invariant differential positivity is applied to a class of consensus protocols defined on the $N$-torus $\mathbb{T}^N$. 
Finally, in Section \ref{SO(3)}, we study an extended example involving consensus of $N$ agents on $SO(3)$ through differential positivity with respect to a cone field of rank $k=3$ that is invariant with respect to a left-invariant frame on $SO(3)^N$. 

\subsection{Mathematical preliminaries and notation} 

Given a smooth map $F:\mathcal{M}_1\rightarrow\mathcal{M}_2$ between smooth manifolds $\mathcal{M}_1$, $\mathcal{M}_2$, we denote the differential of $F$ at $x$ by $dF\vert_x : T_x\mathcal{M}_1\rightarrow T_{F(x)}\mathcal{M}_2$. 
A continuous-time dynamical system $\Sigma:\dot{x}=f(x)$ on a smooth manifold $\mathcal{M}$ assigns a tangent vector $f(x)\in T_x\mathcal{M}$ to each point $x\in\mathcal{M}$. We say that $\Sigma$ is  \emph{forward complete} if the domain of any solution $x(\cdot)$ 
is of the form $[t_0,\infty)$.

\subsubsection{Cones of rank $k$ and positivity}

 A cone in a vector space $\mathcal{V}$ is usually defined as a closed subset $\mathcal{K}\subset \mathcal{V}$ that satisfies
(i) $\mathcal{K}+\mathcal{K}\subseteq\mathcal{K}$,
(ii) $\lambda \mathcal{K}\subseteq\mathcal{K}$ for all $\lambda\in\mathbb{R}_{\geq 0}$, and
(iii) $\mathcal{K}\cap-\mathcal{K}=\{0\}$.
That is, $\mathcal{K}$ is closed, convex, and pointed. Furthermore, we assume that we are dealing with solid cones that contain $n:=\dim\mathcal{V}$ linearly independent vectors. However, we also use the notion of cones of rank $k$, the simplest case of which is a cone of rank $1$. Given a convex cone $\mathcal{K}$, the set $\mathcal{C}=\mathcal{K}\cup-\mathcal{K}$  defines a cone of rank $1$. The full definition is given below.

\begin{definition}
A closed set $\mathcal{C}$ in a vector space $\mathcal{V}$ is said to be a cone of rank $k$ if 
\begin{enumerate}
\item $x\in\mathcal{C}$, $\alpha\in\mathbb{R} \implies \alpha x\in\mathcal{C}$,
\item $\max\{\dim W:W$ a subspace of $\mathcal{V}, \; W\subset \mathcal{C}\}=k$.
\end{enumerate}
\end{definition}
That is, a closed set $\mathcal{C}$ in a vector space $\mathcal{V}$ is a cone of rank $k$ if (1) it is invariant under scaling by any real number, and (2) the maximum dimension of any subspace contained in $\mathcal{C}$ is $k$.
Note that if $\mathcal{K}$ is a convex cone, $\mathcal{C}=\mathcal{K}\cup-\mathcal{K}$ satisfies the above conditions for $k=1$. 

A polyhedral convex cone in a vector space $\mathcal{V}$ of dimension $n$ endowed with an inner product $\langle\cdot,\cdot\rangle$ can be specified by a collection of inequalities of the form
\begin{equation} \label{polyhedral}
x\in\mathcal{V}: \quad \langle n_i,x\rangle\geq  0, 
\end{equation}
where $\{n_1,\cdot\cdot\cdot, n_m\}$ is a set of $m\geq n$ vectors in $\mathcal{V}$ that span $\mathcal{V}$. For each $i$, the equation (\ref{polyhedral}) defines a halfspace defined by the normal vector $n_i\in\mathcal{V}$. If we relax the requirement that $\{n_i\}$ span $\mathcal{V}$ and instead require that $\dim\operatorname{span}\{n_i\}=l\leq n$, the collection of inequalities (\ref{polyhedral}) define a convex set $\tilde{\mathcal{K}}$ of rank $k=n-l+1$, such that the set
$
\mathcal{C}:=\tilde{\mathcal{K}}\cup-\tilde{\mathcal{K}}
$
is a cone of rank $k=n-l+1$. In particular, $n-k+1$ inequalities of the form $(\ref{polyhedral})$, with linearly independent $n_i$, can be used to define a cone of of rank $k$. A very simple example of a class of polyhedral cones of rank $k$ is obtained from the positive orthant $\mathcal{K}=\mathbb{R}_+^n=\{x_i:x_i \geq0\}$ in $\mathbb{R}^n$ by eliminating $k-1$ of the inequalities $x_i\geq 0$ and retaining the remaining ones. The resulting set $\tilde{\mathcal{K}}$ can be used to generate a generalized cone of rank $k$ as $\mathcal{C}=\tilde{\mathcal{K}}\cup-\tilde{\mathcal{K}}$.

A second class of cones of rank $k$ can be defined using quadratic forms. These cones are known as quadratic cones of rank $k$ and are a generalization of the idea of quadratic cones of rank $1$. Let $P$ be a symmetric  invertible $n\times n$ matrix with $k$ positive eigenvalues and $n-k$ negative eigenvalues. Then the set
\begin{equation}
\mathcal{C}(P)=\{x\in\mathcal{V}:\langle x,Px\rangle\geq0\},
\end{equation}
can be shown to define a cone of rank $k$. In particular, if $P_1$ is a $k\times k$ symmetric positive definite matrix and $P_2$ is an $(n-k)\times(n-k)$ symmetric positive definite matrix, the $n\times n$ matrix 
$P = \diag(P_1,-P_2)$ has $k$ positive eigenvalues and $n-k$ negative eigenvalues and defines a cone of rank $k$ in $\mathbb{R}^n$ via the inequality
\begin{align}
x^TPx &=
\begin{pmatrix}
x_1 \\
x_2
\end{pmatrix}^T
\begin{pmatrix}
P_1 & 0 \\
0 & -P_2
\end{pmatrix}
\begin{pmatrix}
x_1 \\
x_2
\end{pmatrix} =
x_1^T P_1 x_1 - x_2^T P_2 x_2 \geq 0,
\end{align}
where $x_1\in\mathbb{R}^k$, $x_2\in\mathbb{R}^{n-k}$. Given a quadratic cone $\mathcal{C}$ of rank $k$, the closure of the set $\mathcal{V}\setminus\mathcal{C}$ is a cone of rank $n-k$, and is known as the complementary cone $\mathcal{C}^c$ of $\mathcal{C}$.

In this article, a cone $\mathcal{C}$ will refer to either a closed, convex, and pointed cone $\mathcal{K}$, or a cone of rank $k$. The precise nature of the cone will be made explicit whenever it is relevant to the result under consideration.
A linear map $T\in\mathcal{L}(\mathcal{V})$ on a vector space $\mathcal{V}$ is said to be positive with respect to a cone $\mathcal{C}\subset \mathcal{V}$ if 
$T(\mathcal{C})\subseteq\mathcal{C}$. The classical Perron-Frobenius theorem generalizes to cones of rank $k$ in the form of the following result \cite{Fusco1991} .

\begin{theorem}  \label{PF k}
Let $\mathcal{C}$ be a cone of rank $k$ in a vector space $\mathcal{V}$ of dimension $n$ and suppose that $T\in\mathcal{L}(\mathcal{V})$ is a strictly positive linear map with respect to $\mathcal{C}$ so that
$T(\mathcal{C}\setminus\{0\})\subset\operatorname{int}\mathcal{C}$. Then there exist unique subspaces $\mathcal{W}_1$ and $\mathcal{W}_2$ of $\mathcal{V}$ such that 
$\dim\mathcal{W}_1=k$, $\dim\mathcal{W}_2=n-k$, $\mathcal{V}=\mathcal{W}_1\oplus\mathcal{W}_2$, which are $T$-invariant:
\begin{equation}
T(\mathcal{W}_i)\subseteq\mathcal{W}_i, \quad \mathrm{for} \quad i=1,2,
\end{equation}
and satisfy $
\mathcal{W}_1\subset \operatorname{int}\mathcal{C}\cup\{0\}$, $\mathcal{W}_2\cap\mathcal{C}=\{0\}$.
Furthermore, denoting the spectrum of $T$ restricted to $\mathcal{W}_i$ by $\sigma_i(T)$ for $i=1,2$ , we have
\begin{equation}
|\lambda_1|>|\lambda_2|, \quad \forall \lambda_1\in\sigma_1(T), \; \lambda_2\in\sigma_2(T).
\end{equation}
\end{theorem}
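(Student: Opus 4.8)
The plan is to bootstrap the statement from the classical Perron--Frobenius theorem for convex cones (Theorem~\ref{PF k} with $k=1$) by passing to the $k$-th exterior power $\Lambda^k\mathcal V$. As a preliminary reduction, note that $T$ is injective on $\mathcal C$: if $Tx=0$ with $x\in\mathcal C$ then $0=Tx\in\operatorname{int}\mathcal C$, which is impossible since the interior of a rank-$k$ cone with $k<n$ omits the origin. Hence $T$ restricts to an injective linear map on every subspace contained in $\mathcal C$, so $\hat T\colon W\mapsto TW$ is a well-defined continuous self-map of the nonempty compact set $\mathcal G:=\{W\in G_k(\mathcal V):W\subseteq\mathcal C\}$, and strict positivity gives $\hat T(\mathcal G)\subseteq\mathcal G_\circ:=\{W\in G_k(\mathcal V):W\setminus\{0\}\subseteq\operatorname{int}\mathcal C\}$; in particular any $T$-invariant $W\in\mathcal G$ automatically lies in $\mathcal G_\circ$.

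To produce $\mathcal W_1$, consider $\Lambda^k T$ acting on $\Lambda^k\mathcal V$. The technical heart of the argument is to upgrade strict positivity of $T$ with respect to $\mathcal C$ to strict positivity of $\Lambda^k T$ with respect to a \emph{pointed, solid, convex} cone $\mathcal C^{[k]}\subseteq\Lambda^k\mathcal V$ whose nonzero decomposable elements are exactly the Pl\"ucker vectors of the subspaces in $\mathcal G_\circ$, consistently oriented (such a coherent orientation exists because $\mathcal G_\circ$ is connected and contractible). Granting this, classical Perron--Frobenius supplies a unique ray $\mathbb R_{>0}\,\omega\subseteq\operatorname{int}\mathcal C^{[k]}$ with $\Lambda^k T\,\omega=\lambda\omega$, where $\lambda>0$ is the spectral radius of $\Lambda^k T$ and is strictly dominant in modulus, together with $\lambda^{-m}(\Lambda^k T)^m\eta\to c_\eta\,\omega$ for every $\eta\in\mathcal C^{[k]}\setminus\{0\}$. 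Taking $\eta$ to be the Pl\"ucker vector of some $W\in\mathcal G$ and using that $\Lambda^k T$ carries decomposables to decomposables while the set of decomposable $k$-vectors is closed, we conclude that $\omega$ is decomposable, say $\omega=w_1\wedge\cdots\wedge w_k$; put $\mathcal W_1:=\operatorname{span}\{w_1,\dots,w_k\}$. Then $T\mathcal W_1=\mathcal W_1$, hence $\mathcal W_1\in\mathcal G_\circ$, i.e.\ $\mathcal W_1\setminus\{0\}\subseteq\operatorname{int}\mathcal C$; the same limit shows $\hat T^{m}(W)\to\mathcal W_1$ in $G_k(\mathcal V)$ for every $W\in\mathcal G$, and uniqueness of the Perron ray gives uniqueness of $\mathcal W_1$.

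Next, list the eigenvalues of $T$ with multiplicity as $|\lambda_1|\ge\cdots\ge|\lambda_n|$. The eigenvalues of $\Lambda^k T$ are the products $\lambda_{i_1}\cdots\lambda_{i_k}$ over $k$-element index sets, so its spectral radius equals $|\lambda_1\cdots\lambda_k|$, attained at $\lambda$; strict dominance of $\lambda$ excludes a tie with $|\lambda_1\cdots\lambda_{k-1}\lambda_{k+1}|$, forcing $|\lambda_k|>|\lambda_{k+1}|$. Moreover $\Lambda^k T\,\omega=\det(T|_{\mathcal W_1})\,\omega$, so $\lambda=\prod_i\mu_i$ with $\mu_1,\dots,\mu_k$ the eigenvalues of $T|_{\mathcal W_1}$; these are $k$ of the $\lambda_j$ with product of maximal modulus, hence equal $\lambda_1,\dots,\lambda_k$, so $\sigma_1(T)=\{\lambda_1,\dots,\lambda_k\}$ and $\mathcal W_1$ is the sum of the corresponding generalized eigenspaces. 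Let $\mathcal W_2$ be the sum of the generalized eigenspaces of $\lambda_{k+1},\dots,\lambda_n$: since the splitting $\{|\cdot|>|\lambda_{k+1}|\}\sqcup\{|\cdot|\le|\lambda_{k+1}|\}$ is conjugation-invariant, $\mathcal W_2$ is a real $T$-invariant subspace of dimension $n-k$, complementary to $\mathcal W_1$, unique given the gap, and $|\lambda_1|\ge|\lambda_k|>|\lambda_{k+1}|\ge|\lambda_2|$ for all $\lambda_1\in\sigma_1(T)$, $\lambda_2\in\sigma_2(T)$. Finally, to see $\mathcal W_2\cap\mathcal C=\{0\}$, suppose $0\ne v\in\mathcal W_2\cap\mathcal C$; replacing $v$ by $Tv$ we may assume $v\in\operatorname{int}\mathcal C$, so $v$ lies in some $W\in\mathcal G$ (every interior point of $\mathcal C$ does; readily verified for quadratic cones of rank $k$). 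Then $\hat T^{m}(W)\to\mathcal W_1$ by the previous paragraph, so $\hat T^{m}(W)\cap\mathcal W_2=\{0\}$ for large $m$, contradicting $0\ne T^{m}v\in\hat T^{m}(W)\cap\mathcal W_2$. (Alternatively, $\mathcal W_2$ and this last property arise by running the argument on the complementary cone $\mathcal C^{c}$ of rank $n-k$.)

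The step I expect to be the main obstacle is the construction of the convex cone $\mathcal C^{[k]}$ in $\Lambda^k\mathcal V$ --- equivalently, of a complete Hilbert-type metric on $\mathcal G_\circ$ that $\hat T$ strictly contracts. This is Birkhoff's contraction principle for positive operators transplanted from convex cones to cones of rank $k\ge2$, and the delicate points are the coherent orientation of the decomposable $k$-vectors attached to $\mathcal G_\circ$ and the verification that their conical convex hull is pointed and solid. Everything downstream --- the spectral gap, the construction of $\mathcal W_2$, and the emptiness statement $\mathcal W_2\cap\mathcal C=\{0\}$ --- is then comparatively routine linear algebra and compactness.
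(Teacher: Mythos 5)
The paper does not prove this statement at all: Theorem~\ref{PF k} is quoted verbatim from \cite{Fusco1991}, whose argument works directly on the compact set of $k$-dimensional subspaces contained in $\mathcal{C}$ inside the Grassmannian, using the strict inclusion $T(\mathcal{C}\setminus\{0\})\subset\operatorname{int}\mathcal{C}$ and compactness to force contraction of that set under iteration, rather than passing to exterior powers. So your proposal is necessarily a different route, and it has to be judged on its own.

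As written it has a genuine gap, and it sits exactly where you say it does: the existence of a pointed, solid, convex cone $\mathcal{C}^{[k]}\subseteq\Lambda^k\mathcal{V}$ on which $\Lambda^kT$ is strictly positive is the entire content of the theorem in this approach, and you assume it rather than prove it. Worse, the auxiliary facts you invoke to make the construction plausible are false for cones of rank $k$ as defined in this paper (a closed set, invariant under real scaling, whose maximal contained subspace has dimension $k$ --- nothing more). The set $\mathcal{G}_\circ$ need not be connected, let alone contractible: already for $k=1$ a rank-$1$ cone can be a union of several disjoint double cones, and then no coherent orientation of the Pl\"ucker (here: ordinary) vectors exists and their conical convex hull fails to be pointed. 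Similarly, the claim that every interior point of $\mathcal{C}$ lies in some $k$-dimensional subspace contained in $\mathcal{C}$ --- which your proof of $\mathcal{W}_2\cap\mathcal{C}=\{0\}$ relies on --- holds for quadratic cones but not in general (e.g.\ the union of a $2$-plane with a thin solid double cone about a transversal line is a rank-$2$ cone in $\mathbb{R}^3$ whose interior points do not lie in any $2$-plane inside the cone). So the exterior-power strategy, while a legitimate and well-known device for dominated splittings with respect to \emph{nice} (e.g.\ quadratic) cones of rank $k$, does not deliver the theorem at the stated level of generality; the downstream spectral bookkeeping ($|\lambda_k|>|\lambda_{k+1}|$, identification of $\sigma_1(T)$, construction of $\mathcal{W}_2$ from generalized eigenspaces) is fine, but it all hangs on the unestablished lemma. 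If you want a complete proof you should either restrict the class of cones so that $\mathcal{C}^{[k]}$ can actually be built, or follow the Grassmannian contraction argument of \cite{Fusco1991}.
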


\subsubsection{Conal manifolds}

A pointed convex cone $\mathcal{K}$ in $\mathbb{R}^n$ induces a partial order in $\mathbb{R}^n$ such that for any pair $x_1,x_2\in \mathbb{R}^n$, we write
$
x_1\preceq x_2$ if and only if $x_2-x_1\in\mathcal{K}$.
It is of course the convexity of $\mathcal{K}$ and the condition $\mathcal{K}\cap-\mathcal{K}=\{0\}$ which ensure that $\preceq$ defines a global partial order on $\mathbb{R}^n$. As a cone $\mathcal{C}$ of rank $k$ in $\mathbb{R}^n$ does not satisfy these conditions, it does not induce a partial order. Nonetheless, we can still define a weakened notion of an `order' relation between two points $x_1,x_2\in\mathbb{R}^n$ with respect to $\mathcal{C}$ in a similar fashion. Specifically, we say that $x_1$ and $x_2$ are \emph{related} with respect to $\mathcal{C}$ and write $x_1\sim x_2$ if 
$
x_2-x_1\in\mathcal{C}$.
We say that $x_1$ and $x_2$ are strongly related and write $x_1\approx x_2$ if $x_2-x_1\in\operatorname{int}\mathcal{C}$.  

We define a \emph{conal manifold} as a smooth manifold $\mathcal{M}$ endowed with a \emph{cone field} $\mathcal{C}_{\mathcal{M}}$, which smoothly assigns a cone
$\mathcal{C}_{\mathcal{M}}(x)\subseteq T_x\mathcal{M}$
to each point $x\in\mathcal{M}$.
Given a cone field $\mathcal{C}$ on a manifold $\mathcal{M}$, we say that two points $x_1, x_2\in\mathcal{M}$ are related with respect to $\mathcal{C}$ and write $x_1\sim x_2$ if there exists a conal curve $\gamma$ connecting $x_1$ to $x_2$, so that
$
\gamma'(t)\in\mathcal{C}(\gamma(t))$
at all points along the curve.
Indeed, if $\mathcal{C}$ is a cone field of rank $k$ on a manifold $\mathcal{M}$, the continuous-time system $\dot{x}=f(x)$ on $\mathcal{M}$ with semiflow $\psi_t(x)$ is said to be \emph{monotone} with respect to $\mathcal{C}$ if
\begin{equation}
x_1\sim x_2 \implies \psi_t(x_1)\sim\psi_t(x_2),
\end{equation}
for all $t>0$. The system is \emph{strongly monotone} with respect to $\mathcal{C}$ if
\begin{equation}
x_1\sim x_2, \quad x_1\neq x_2 \implies \psi_t(x_1)\approx\psi_t(x_2),
\end{equation}
for all $t>0$.

 A field $\mathcal{K}_{\mathcal{M}}$ of closed, convex and pointed cones gives rise to a \emph{conal order}  $\prec$ and the manifold $\mathcal{M}$ is said to be an \emph{infinitesimally partially ordered} manifold when endowed with $\mathcal{K}_{\mathcal{M}}$. The relation $\sim$ induced by $\mathcal{K}_{\mathcal{M}}$ is now denoted by $\prec$ and is locally a partial order on $\mathcal{M}$. If the conal order is globally antisymmetric, then it is a partial order. It is clear that $\prec$ defines a global partial order when $\mathcal{M}$ is a vector space and the cone field $\mathcal{K}_{\mathcal{M}}(x)=\mathcal{K}$ is constant. Specifically, $x\prec y$ if and only if $y-x\in\mathcal{K}$, for $x$, $y\in\mathcal{M}$. In general, however, $\prec$ is not a global partial order, since antisymmetry may fail. A simple example of this is provided by any conal order defined on the circle $\mathbb{S}^1$, which clearly fails to be global. See Figure~\ref{fig:ConeOrder} $(a)$.

\begin{figure}
\centering
\includegraphics[width=1\linewidth]{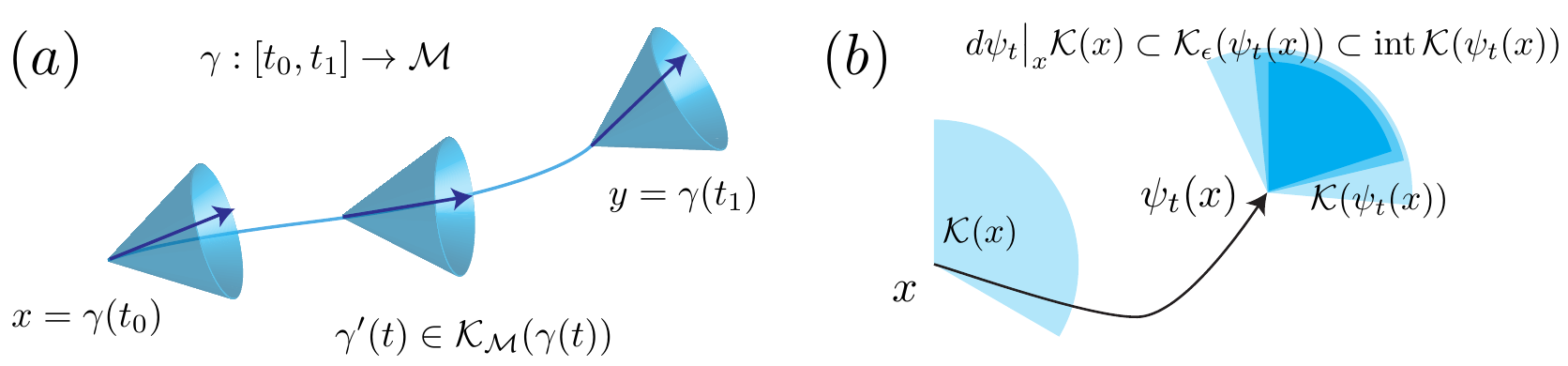}
  \caption{$(a)$ A conal order $\prec$ induced on a manifold $\mathcal{M}$ by a cone field $\mathcal{K}_{\mathcal{M}}$. Points $x,y\in\mathcal{M}$ satisfy $x\prec y$ if there exists a conal curve $\gamma$ from $x$ to $y$. $(b)$ Strict differential positivity of the system $\dot{x}=f(x)$ with respect to a cone field $\mathcal{K}$. The flow at time $t\geq T$ from initial condition $x$ is denoted by $\psi_t(x)$.
  }
  \label{fig:ConeOrder}
\end{figure}

\subsubsection{Differential positivity} 

To define the notion of \emph{uniform} strict differential positivity on a conal manifold $\mathcal{M}$, we construct a family of cones $\mathcal{C}_{\lambda}(x)\subset T_x\mathcal{M}$ of the same type at each point $x\in\mathcal{M}$ that continuously depend on a parameter $\lambda\geq 0$, such that $\mathcal{C}_0(x)=\mathcal{C}(x)$ and $\mathcal{C}_{\lambda_2}(x)\setminus\{0_x\}\subset \operatorname{int}\mathcal{C}_{\lambda_1}(x)$ if $\lambda_1<\lambda_2$. See Figure~\ref{fig:ConeOrder} $(b)$.

\begin{definition}
A discrete-time dynamical system $x^+=F(x)$ given by a smooth map $F:\mathcal{M}\rightarrow\mathcal{M}$ on a manifold $\mathcal{M}$ is said to be differentially positive with respect to a cone field $\mathcal{C}$ if 
\begin{equation}  \label{discrete pos}
d F\big\vert_x\mathcal{C}(x)\subseteq\mathcal{C}(F(x)), \quad \forall x\in\mathcal{M},
\end{equation}
and strictly differentially positive if the inclusion in (\ref{discrete pos}) is strict.
A continuous-time dynamical system $\Sigma$ is said to be differentially positive with respect to $\mathcal{C}$ if 
\begin{equation}
d\psi_t\big\vert_x\mathcal{C}(x)\subseteq\mathcal{C}(\psi_t(x)). \quad \forall x\in\mathcal{M}, \; \forall t\geq 0,
\end{equation}
where $\psi_t(x)$ denotes the flow at time $t$ from initial condition $x$.
The system is said to be uniformly strictly differentially positive if there exists $T>0$ and $\epsilon>0$ such that
\begin{equation}
d\psi_t\big\vert_x\mathcal{C}(x)\subset\mathcal{C}_{\epsilon}(\psi_t(x)), \quad \forall x\in\mathcal{M}, \; \forall t\geq T. 
\end{equation}
\end{definition}

 \subsubsection{Finsler metrics} 
 
We define a \emph{Finsler metric} on a smooth manifold $\mathcal{M}$ to be a continuous function $F:T\mathcal{M}\rightarrow [0,\infty]\subset\mathbb{R}$ that satisfies the following properties:
\begin{enumerate}
\item $F(x,\xi) > 0$ for all $x\in \mathcal{M}$ and $\xi\in T_x\mathcal{M}\setminus\{0_x\}$,
\item $F(x,\xi_1+\xi_2)\leq F(x,\xi_1)+F(x,\xi_2)$, for all $x\in \mathcal{M}$, $\xi_1,\xi_2\in T_x\mathcal{M}$,
\item $F(x,\lambda\xi)=\lambda F(x,\xi)$, for all $x\in T_x\mathcal{M}$, $\xi\in T_x\mathcal{M}$ and $\lambda \geq 0$.
\end{enumerate}
A Finsler metric is \emph{reversible} if it satisfies $F(x,-\xi)=F(x,\xi)$ for all $x\in\mathcal{M}$ and $\xi\in T_x\mathcal{M}$. A reversible Finsler metric defines a norm on each tangent space, which we will denote by $F(x,\xi)=\|\xi\|_x$ when the choice of the Finsler structure $F$ is clear from the context. 
A reversible Finsler structure defines a distance function $d$ between points $x_1,x_2\in\mathcal{M}$ according to
\begin{equation}
d(x,y)=\inf_{\gamma}\int_0^1\|\gamma'(t)\|_{\gamma(t)}dt,
\end{equation}
where the infimum is taken over all smooth curves $\gamma:[0,1]\rightarrow \mathcal{M}$ joining $x_1$ and $x_2$.  Clearly, a Riemannian metric tensor  on $\mathcal{M}$ given by a smoothly varying inner product $\langle\cdot,\cdot\rangle_x$ induces a  reversible Finsler structure on $\mathcal{M}$.

\section{Positivity, monotonicity, and consensus on the real line} \label{real}

In this section, we assume that the cone $\mathcal{K}$ is closed, convex, and pointed.
The special case of differential positivity on a linear space with respect to a constant cone field  highlights that invariant differential positivity in a linear space is precisely the local characterization of monotonicity. Indeed, recall that a dynamical system $\Sigma$ on a vector space $\mathcal{V}$ endowed with a partial order $\preceq$ induced by some cone  $\mathcal{K}\subseteq\mathcal{V}$ is said to be monotone if for any $x_1, x_2\in\mathcal{V}$ the trajectories $\psi_{t}$ satisfy
$\psi_{t}(x_1)\preceq_{\mathcal{K}}\psi_{t}(x_2)$ whenever $x_1\preceq_{\mathcal{K}}x_2$,
for all $t > 0$. Now endow the manifold $\mathcal{V}$ with the constant cone field 
$\mathcal{K}_{\mathcal{V}}(x):=\mathcal{K}$ and note that the infinitesimal difference $\delta x(\cdot):=\hat{x}(\cdot)-x(\cdot)$ between two ordered neighboring solutions $x(t)\preceq_{\mathcal{K}_{\mathcal{V}}}\hat{x}(t)$ satisfies $\delta x(t)\in\mathcal{K}_{\mathcal{V}}(x(t))$, $\forall t\geq t_0$. Since $(x(\cdot),\delta x (\cdot))$ is a trajectory of the prolonged or variational system $\delta \Sigma$, this shows that the system is monotone if and only if it is differentially positive.
That is, the system is monotone if and only if for all $t > 0$,
\begin{equation}
\delta x(0)\in\mathcal{K} \implies \delta x(t)\in\mathcal{K}.
\end{equation}
A linear space is of course a  Lie group with the group operation given by linear translations. A constant cone field thus has the direct interpretation of a cone field first defined at identity and then translated to every point in an invariant manner. See Section \ref{Lie} for details.

Discrete-time linear consensus algorithms result in time-varying systems of the form
$
x^+(t):= x(t+1)=A(t)x(t),
$
where $A(t)$ is row-stochastic. That is, its elements are non-negative and its rows sum to $1$:
$A(t)\boldsymbol{1}=\boldsymbol{1}$, and $a_{ij}(t)\geq 0$,  for $i\neq j$,
where $\boldsymbol{1}=(1,\ldots,1)^T$. Protocols of this form arise from $N$ nodes exchanging information about a scalar quantity $x_i(t)$ along communication edges $(i,j)$ weighted by positive scalars $a_{ij}(t)>0$:
$
x^+_{k}(t)=\sum_{i:(k,i)\in\mathcal{E}}a_{ki}(t)x_i(t),
$
where $\mathcal{G}=(\mathcal{V},\mathcal{E})$ is the communication graph with vertices given by the set $\mathcal{V}$ and edges given by $\mathcal{E}$. For vertices $i$ and $j$, we set $a_{ij}=0$ if and only if $(i,j)\notin\mathcal{E}$. Recall that a directed graph or \emph{digraph} consists of a finite set $\mathcal{V}$ of vertices and a set $\mathcal{E}$ of edges which represent interconnections among the vertices expressed as ordered pairs $(i,j)$ of vertices. A weighted digraph is a digraph together with a set of weights that assigns a nonnegative scalar $a_{ij}$ to each edge $(i,j)$. A digraph is said to be \emph{undirected} if $a_{ij}=a_{ji}$ for all $i,j\in\mathcal{V}$. If $(i,j)\in\mathcal{E}$ whenever $(j,i)\in\mathcal{E}$, but perhaps $a_{ij}\neq a_{ji}$ for some $i,j\in\mathcal{V}$, then the graph is said to be \emph{bidirectional}. A digraph is said to be \emph{strongly connected} if there exists a directed path from every vertex to every other vertex. One can also consider a time-varying graph $\mathcal{G}(t)$ in which the vertices remain fixed, but the edges and weights can depend on time. A time-varying digraph is called a $\delta$-digraph if the nonnegative weights $a_{ij}(t)$ are bounded and satisfy $a_{ij}(t)\geq \delta >0$, for all $(i,j)\in\mathcal{E}(t)$.

Tsitsiklis \cite{Tsitsiklis1984} observed that the Lyapunov function
\begin{equation} \label{consensus2}
V(x)=\max_{1\leq i \leq N}x_i-\min_{1\leq i\leq N} x_i
\end{equation}
is never increasing along solutions of the consensus dynamics. Under suitable connectedness assumptions, it decreases uniformly in time. The non-quadratic nature of the Lyapunov function (\ref{consensus2}) is a key feature in the analysis of consensus algorithms. This property is intimately connected to the Hilbert metric. It is easy to see that the linear system $x^+=Ax$ is strictly positive monotone with respect to the positive orthant $\mathbb{R}^N_+$ in $\mathbb{R}^N$ for a strongly connected graph $\mathcal{G}$. The uniform strict positivity also holds in the case of a time-varying consensus protocol, provided that the digraph $\mathcal{G}(t)$ is a $\delta$-digraph that is uniformly connected over a finite time horizon. See  \cite{Moreau2004,Moreau2005} for a definition.

Given a cone $\mathcal{K}$ in $\mathbb{R}^N$, the Hilbert metric $d_{\mathcal{K}}$ induced by $\mathcal{K}$ 
defines a projective metric in $\mathcal{K}$ \cite{Bushell1973}. Birkhoff's theorem establishes a link between positivity and contraction of the Hilbert metric \cite{Birkhoff1957}, essentially providing a projective fixed point theorem based on the contraction of this metric. Indeed, since $A\boldsymbol{1}=\boldsymbol{1}$, Birkhoff's theorem suggests that the projective distance to $\boldsymbol{1}$ is a natural choice of Lyapunov function:
\begin{equation}
V_{B}(x)=d_{\mathbb{R}^N_+}(x,\boldsymbol{1})=\log{\max_i x_i \over \min_i x_i} = \max_i\log x_i - \min_i\log x_i,
\end{equation}
which is clearly the same as the Tsitsiklis Lyapunov function in $\log$ coordinates. 

Similarly, continuous-time linear consensus algorithms take the form
$
\dot{x}=A(t)x,
$
where $A(t)$ is now a \emph{Metzler} matrix whose rows sum to zero and whose off-diagonal elements are non-negative:
$A(t)\boldsymbol{1}=0$, and  $a_{ij}(t)\geq 0$ for $i\neq j$.
Such continuous-time linear protocols arise from dynamics of the form
$
\dot{x}_{k}=\sum_{i:(k,i)\in \mathcal{E}}a_{ki}(t)(x_i-x_k),
$
and are once again uniformly strictly differentially positive with respect to the positive orthant $\mathcal{K}:=\mathbb{R}^N_+$ in $\mathbb{R}^N$ for a strongly connected time-varying digraph $\mathcal{G}(t)$ that is uniformly connected over a finite time horizon. The Hilbert metric $V_{B}(x)=d_{\mathcal{K}}(x,\boldsymbol{1})$ provides the Lyapunov function as in the discrete case.

The seminal paper of \cite{Moreau2005} highlights the underlying geometry of consensus algorithms such as the ones considered here, which is that the convex hull of the states $\{x_1, x_2, \dots, x_n\}$ never expands under the consensus update. The Lyapunov function (\ref{consensus2})  is a measure of the diameter of the convex hull. This insight leads to a number of nonlinear  generalizations of consensus theory. For instance, the linear update can be replaced by an arbitrary monotone update, without altering the convergence analysis \cite{Moreau2003}.

\section{Invariant differential positivity on Lie groups} \label{Lie}

\subsection{Lie groups and Lie algebras}

Let $G$ be a smooth manifold that also has a group structure. Then $G$ is said to be a \emph{Lie group} if the group multiplication $(\,\cdot\,,\,\cdot\,):G\times G\rightarrow G$, $(g_1,g_2)\rightarrow g_1g_2$ and group inverse operations $(\,\cdot\,)^{-1}:G\rightarrow G$, $g\rightarrow g^{-1}$ are smooth mappings.
Given two Lie groups $G$, $H$, the product $G\times H$ is itself a Lie group with the product manifold structure and group operation $(g_1,h_1)(g_2,h_2)=(g_1g_2,h_1h_2)$. A Lie group $H$ is said to be a \emph{Lie subgroup} of a Lie group $G$ if it is a subgroup of $G$ and an immersed submanifold of $G$.  
For fixed $a\in G$, the left and right translation maps 
$L_a,R_a:G\rightarrow G$ are defined by $L_a(g)=ag$ and $R_a(g)=ga$,
respectively. Denote the group identity element in $G$ by $e$ and consider the map $L_{g}$ which maps $e\in G$ to $g\in G$. The diffeomorphism $L_{g}$ induces a vector space isomorphism $dL_{g}\vert_e:T_eG\rightarrow T_gG$. Thus, one can use the differential $dL_{g}\vert_e$ to move objects in the tangent space $T_eG$ to the tangent space $T_gG$. Of course, one can use right translations in a similar way. 

A vector field $X$ on a Lie group $G$ is said to be \emph{left-invariant} if 
\begin{equation}
X_{g_1g_2}=dL_{g_1}\big\vert_{g_2} X_{g_2}, \quad \forall g_1, g_2 \in G.
\end{equation}
Similarly, X is said to be \emph{right-invariant} if $X_{g_1g_2}=dR_{g_2}\vert_{g_1} X_{g_1}$ for all $g_1, g_2 \in G$. Note that a left-invariant vector field is uniquely determined by its value at the identity element $e$ by the equation $X_g=dL_g\vert_eX_e$, for each $g\in G$. We denote the set of all left-invariant vector fields on a Lie group $G$ by $\mathfrak{g}$. We endow $\mathfrak{g}$ with the Lie bracket operation $[\cdot,\cdot]$ on vector fields defined by $[X,Y]\vert_g(q)=X(Y(q))-Y(X(q))\vert_g$, for all points $g\in G$ and smooth functions $q:G\rightarrow \mathbb{R}$. Note that $[\cdot,\cdot]$ is closed on the set $\mathfrak{g}$, in the sense that the Lie bracket of two left-invariant vector fields is itself a left-invariant vector field. The set $\mathfrak{g}$ endowed with the Lie bracket operation is known as the Lie algebra of $G$. Clearly there exists a linear isomorphism between $\mathfrak{g}$ and $T_e G$, given by $X\mapsto X_e$. 

A \emph{one-parameter subgroup} of a Lie group $G$ is a smooth homomorphism $\varphi:(\mathbb{R},+)\rightarrow G$. That is, a curve $\varphi:\mathbb{R}\rightarrow G$ satisfying $\varphi(s+t)=\varphi(s)\varphi(t)$, $\varphi(0)=e$ and $\varphi(-t)=\varphi(t)^{-1}$, for all $s, t\in\mathbb{R}$.
There exists a one-to-one correspondence between one-parameter subgroups of $G$ and elements of $T_eG$, given by the map $\varphi\mapsto d\varphi\vert_0 1$. Furthermore, for each $X\in\mathfrak{g}$, there exists a unique one parameter subgroup $\varphi_X:\mathbb{R}\rightarrow G$ such that $\varphi'_X(0)=X$.
The \emph{exponential map} $\exp:\mathfrak{g}\rightarrow G$ of a Lie group $G$ is defined by $\exp(X):=\varphi_X(1)$, where $\varphi_X$ is the unique one-parameter subgroup corresponding to $X\in\mathfrak{g}$.
The curve $\gamma(t)=\varphi_{tX}(1)=\varphi_X(t)$ is the unique homomorphism in $G$ with $\gamma'(0)=X$. Note that $\exp:\mathfrak{g}\rightarrow G$ maps a neighborhood of $0\in\mathfrak{g}$ diffeomorphically onto a neighborhood of $e\in G$. 

For a matrix group $G$ arising as a subgroup of the general linear group, the exponential map $\exp:\mathfrak{g}\rightarrow G$ coincides with the usual exponential map for matrices defined by
$
e^A=I+A+\frac{A^2}{2}+\cdot\cdot\cdot = \sum_{n=0}^\infty\frac{1}{n!}A^n
$
for a real $n\times n$ matrix $A$. Moreover, for a matrix group $G$ the Lie bracket on $\mathfrak{g}$ coincides with the usual Lie bracket operation on matrices given by $[A,B]=AB-BA$, and the differential of the left translation map is given by
$
dL_g\vert_eX=gX$, for $X\in\mathfrak{g}$.
It is also well-known that for matrix groups the \emph{adjoint} representation takes the form
$\mathrm{Ad}(g)X=gXg^{-1}$, $\forall g\in G, \forall X\in\mathfrak{g}$.

A Finsler structure $F:TG\rightarrow [0,\infty)$ on a Lie group $G$ is said to be left-invariant if 
\begin{equation}
F(e,\xi)=F(g,dL_g\vert_e\xi), \quad \forall g\in G, \; \forall \xi \in \mathfrak{g}.
\end{equation}
That is, a Finsler metric is left-invariant if the left translations are isometries on $G$. Similarly, 
$F$ is said to be right-invariant if $F(e,\xi)=F(g,dR_g\vert_e\xi)$ for all $g\in G$ and $\xi\in \mathfrak{g}$. A Finsler metric that is both left-invariant and right-invariant is called bi-invariant. Invariant Riemannian metrics on $G$ are defined in a similar way.

\subsection{Cartan connections}

Here we review the Cartan affine connections \cite{Postnikov2013} associated with a Lie group $G$, which will play an important role in deriving linearizations of continuous-time systems defined on Lie groups for studying invariant differential positivity. First, recall that an \emph{affine connection} $\nabla$ on a manifold $\mathcal{M}$ is a bilinear map $\nabla:C^{\infty}(\mathcal{M},T\mathcal{M})\times C^{\infty}(\mathcal{M},T\mathcal{M})\rightarrow C^{\infty}(\mathcal{M},T\mathcal{M})$, $(X,Y)\mapsto\nabla_X Y$, where $C^{\infty}(\mathcal{M},T\mathcal{M})$ denotes the set of smooth vector fields on $\mathcal{M}$, such that for every smooth function $f:\mathcal{M}\rightarrow \mathbb{R}$ and smooth vector fields $X,Y$, (i) $\nabla_{fX}Y=f\nabla_X Y$ and (ii) $\nabla_X(fY)=X(f)Y+f\nabla_X Y$. An affine connection gives a well-defined notion of a covariant derivative $\nabla_\xi X$ of a vector field $X$ with respect to a vector $\xi$ at any point on $\mathcal{M}$. An affine connection $\nabla$ also determines affine geodesics $\gamma=\gamma(t)$, which are defined as curves that satisfy
\begin{equation} \label{affine geo}
\nabla_{\gamma'(t)}\gamma'(t)=0, 
\end{equation}
at all points along the curve. An affine connection defines the \emph{torsion} $T$ and \emph{curvature} $R$ tensors by the formulae
\begin{equation} \label{torsion def}
T(X,Y)=\nabla_X Y -\nabla_Y X - [X,Y],
\end{equation}
\begin{equation}
R(X,Y)Z = \nabla_X \nabla_Y Z - \nabla_Y \nabla_X Z - \nabla_{[X,Y]}Z,
\end{equation}
for $X,Y,Z\in C^{\infty}(\mathcal{M},T\mathcal{M})$.

\begin{definition}
A connection $\nabla$ on a Lie group $G$ is said to be left-invariant if for any two left-invariant vector fields $X$ and $Y$, $\nabla_X Y$ is also left-invariant. That is,
\begin{equation}
X,Y\in \mathfrak{g} \implies \nabla_X Y\in\mathfrak{g}.
\end{equation}
\end{definition}
A left-invariant connection is characterized by an $\mathbb{R}$-bilinear mapping $\alpha:\mathfrak{g}\times\mathfrak{g}\rightarrow\mathfrak{g}$ given by $\alpha(X,Y)=\nabla_X Y$, which provides a bijective correspondence between left-invariant connections on $G$ and multiplications on  $\mathfrak{g}$. Each multiplication $\alpha$ admits a unique decomposition $\alpha = \alpha' +\alpha''$ into a symmetric part $\alpha'$ and a skew-symmetric part $\alpha''$.

\begin{definition}
A left-invariant connection $\nabla$ on a Lie group $G$ is said to be a Cartan connection if the affine geodesics through the identity element coincide with the one-parameter subgroups of $G$.
\end{definition}

A left-invariant connection $\nabla$ is a Cartan connection if and only if the corresponding map $\alpha$ satisfies $\alpha(X,X)=0$ for all $X\in\mathfrak{g}$; i.e. iff it is skew-symmetric \cite{Postnikov2013}. The one-dimensional family of connections characterized by 
\begin{equation}
\alpha(X,Y)=\lambda[X,Y], \quad X,Y\in\mathfrak{g},
\end{equation}
where $\lambda\in\mathbb{R}$ generate Cartan connections. The corresponding torsion $T$ and curvature $R$ tensors take the form
\begin{equation}
T(X,Y)=(2\lambda-1)[X,Y], \quad X,Y\in\mathfrak{g}, 
\end{equation}
\begin{equation}
R(X,Y)Z=(\lambda^2-\lambda)[[X,Y],Z],\quad X,Y,Z\in\mathfrak{g}.
\end{equation}

For $\lambda = 0$ and $\lambda =1$, we obtain the two canonical Cartan connections whose curvature tensors vanish. The connection corresponding to $\lambda = 0$ is called the \emph{left Cartan connection} and satisfies
\begin{equation}
\nabla_X Y = 0, \quad T(X,Y) = -[X,Y], \quad \forall X,Y\in\mathfrak{g}.
\end{equation}
This connection is the unique connection on $G$ with respect to which the left-invariant vector fields are covariantly constant. That is, for any $\xi\in T_gG$ and $X\in\mathfrak{g}$, we have $\nabla_{\xi}X = 0$. Indeed a vector field is covariantly constant with respect to the left Cartan connection if and only if it is left-invariant.

The connection corresponding to $\lambda = 1$ is called the \emph{right Cartan connection} and satisfies
\begin{equation}
\nabla_X Y = [X,Y], \quad T(X,Y) = [X,Y], \quad \forall X,Y\in\mathfrak{g}.
\end{equation}
In analogy with the left Cartan connection, a vector field is covariantly constant with respect to the right Cartan connection if and only if it is right-invariant. Finally, we note that for the left and right Cartan connections, the torsion tensor $T$ is covariantly constant:
\begin{equation}
\nabla T = 0.
\end{equation}

\subsection{Invariant cone fields on Lie groups}

Let $G$ be a Lie group with Lie algebra $\mathfrak{g}$.  A cone field $\mathcal{C}_G$ is \emph{left-invariant} if it is invariant with respect to a left-invariant frame. This is equivalent to the following definition.

\begin{definition}
A cone field $\mathcal{C}_G$ on a Lie group $G$ is said to be \emph{left-invariant} if 
\begin{equation}
\mathcal{C}_G(g_1g_2)=dL_{g_1}\big\vert_{g_2}\mathcal{C}_G(g_2),
\end{equation}
for all $g_1,g_2\in G$.
\end{definition}

Note that a left-invariant cone field is characterized by the cone in the tangent space at identity $T_eG=\mathfrak{g}$. That is, given a cone $\mathcal{C}$ in $\mathfrak{g}$, the corresponding left-invariant cone field is given by
$\mathcal{C}_G(g)=dL_g\vert_e\mathcal{C}$, 
for all $g \in G$. 
For example, if we are given a polyhedral cone $\mathcal{C}$ in $T_eG$ that is specified via a collection of inequalities
$
x\in T_eG: \langle n_i,x\rangle_e\geq  0$, 
where $\{n_i\}$ is a collection of vectors in $T_eG$ and $\langle\cdot,\cdot\rangle_e$ is an inner product on $T_eG$, then the corresponding left-invariant cone field of rank $k$ can be defined by the collection of inequalities  
$\delta g\in T_g G:\langle dL_g\big\vert_e n_i,\delta g\rangle_g\geq  0$,
for all $g\in G$, where $\langle\cdot,\cdot\rangle_g$ is the unique left-invariant Riemannian metric corresponding to the inner-product $\langle\cdot,\cdot\rangle_e$  in $T_eG$.

Similarly, given a quadratic cone $\mathcal{C}$ of rank $k$ in $T_eG$ defined by $\langle x, Px \rangle_e\geq 0$ for $x\in T_eG$, where $P$ is a symmetric invertible $n\times n$ matrix with $k$ positive eigenvalues and $n-k$ negative eigenvalues, the corresponding left-invariant cone field is given by
$\delta g\in T_gG:\langle \delta g,dL_g\vert_e P dL_g^{-1}\vert_g\delta g\rangle_g\geq  0$, 
for all $g\in G$, where once again $\langle\cdot,\cdot\rangle_g$ denotes the left-invariant Riemannian metric on $G$ corresponding to the inner-product $\langle\cdot,\cdot\rangle_e$. 
A graphical representation of a left-invariant pointed convex cone field is shown in Figure~\ref{fig:LeftCone}.
Of course, analogously one can define notions of right-invariant cone fields using the vector space isomorphisms $dR_g\vert_{e}$ induced by right translations on $G$. We call a cone field \emph{bi-invariant} if it is both left-invariant and right-invariant.

\begin{figure}
\centering
\includegraphics[width=0.45\linewidth]{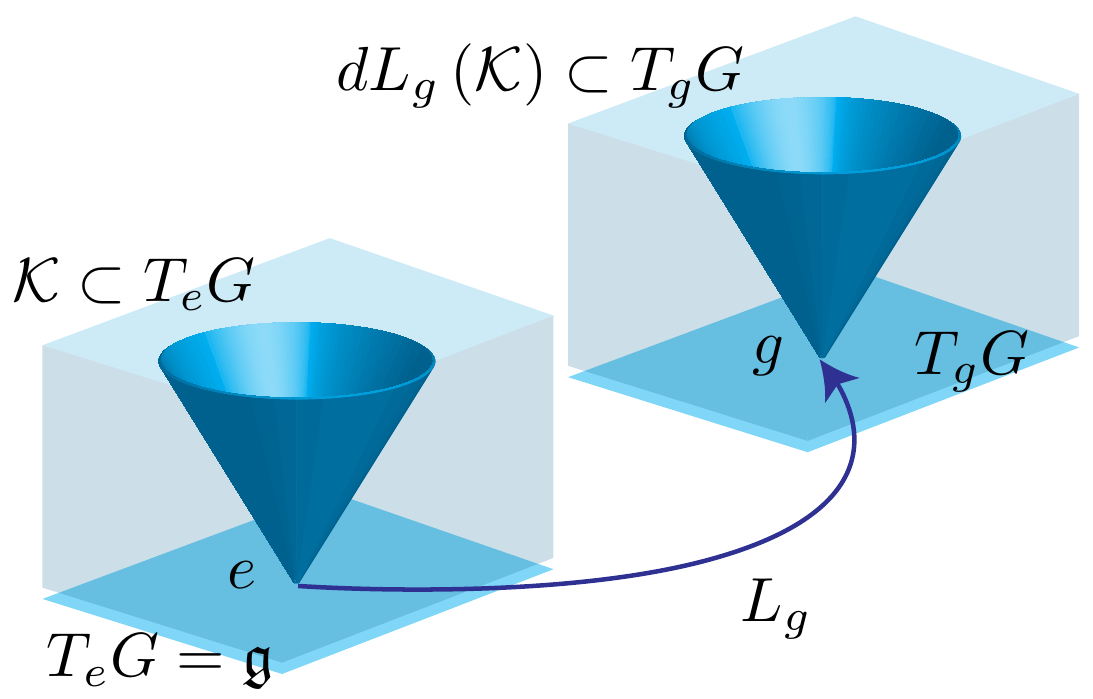}
  \caption{A left-invariant cone field $\mathcal{K}_G$ on a Lie group $G$. The cone at point $g\in G$ is given by $\mathcal{K}_G(g)=dL_g\vert_e(\mathcal{K})$, where $\mathcal{K}$ denotes the cone at the identity element $e\in G$.
  }
  \label{fig:LeftCone}
\end{figure}

\subsection{Invariant differential positivity}

Let $G$ be a Lie group and consider the discrete-time dynamical system defined by
$
g^+=F(g)$, 
where $F:G\rightarrow G$ is smooth. From a differential analytic point of view, we are interested in properties of the system that can be derived by studying its linearization, which can be expressed as
\begin{equation}  \label{discrete2}
\delta g^+=dF\big\vert_g\delta g, \quad \forall g\in G.
\end{equation}
Now the system is differentially positive with respect to a left-invariant cone field $\mathcal{C}(g):=dL_g\vert_e\mathcal{C}$ if
$
d F\vert_g\mathcal{C}(g)\subseteq \mathcal{C}(F(g))=d L_{F(g)}\vert_e\mathcal{C}$,
where $\mathcal{C}$ denotes the cone at the identity element $e$. We can reformulate this as a condition expressed on a single tangent space $T_g G$:
$
dL_{g F(g)^{-1}}\vert_{F(g)}\circ d F\vert_g\mathcal{C}(g)\subseteq \mathcal{C}(g)$.
The virtue of this reformulation is that it provides a characterization of differential positivity in the form of a pointwise positivity condition on a linear map
\begin{equation}
 \hat{A}(g):=dL_{g F(g)^{-1}}\big\vert_{F(g)}\circ d F\big\vert_g:T_gG\rightarrow T_gG
 \end{equation}
defined on each tangent space. Indeed, we can go further and identify each tangent vector in $T_gG$ with an element of $T_eG$ through left translation and thus a vector $\mathbf{v}\in\mathbb{R}^n$ via the \emph{vectorization} or $^{\vee}$ map $\left(\,\cdot\,\right)^{\vee}:T_e G\rightarrow \mathbb{R}^n$, where $n=\operatorname{dim}G$. Differential positivity of $g^+=F(g)$ with respect to a left-invariant cone field generated by a cone $\mathcal{C}$ in $\mathbb{R}^n$ identified with $T_eG$ is now reduced to positivity of the linear map
 \begin{equation}
\mathbf{v}^+=A(g)\mathbf{v}, \quad \mathbf{v}\in\mathbb{R}^n
\end{equation}
with respect to $\mathcal{C}$ for all $g\in G$, where $A(g):\mathbb{R}^n\rightarrow\mathbb{R}^n$ denotes the linear map
corresponding to $\hat{A}(g)$ 
after $T_gG$ is identified with $\mathbb{R}^n$ as described. See Figure~\ref{fig:invdiffpos} for a visual representation of these ideas in the context of differential positivity with respect to closed, convex, and pointed cones.

\begin{figure}
\centering
\includegraphics[width=0.8\linewidth]{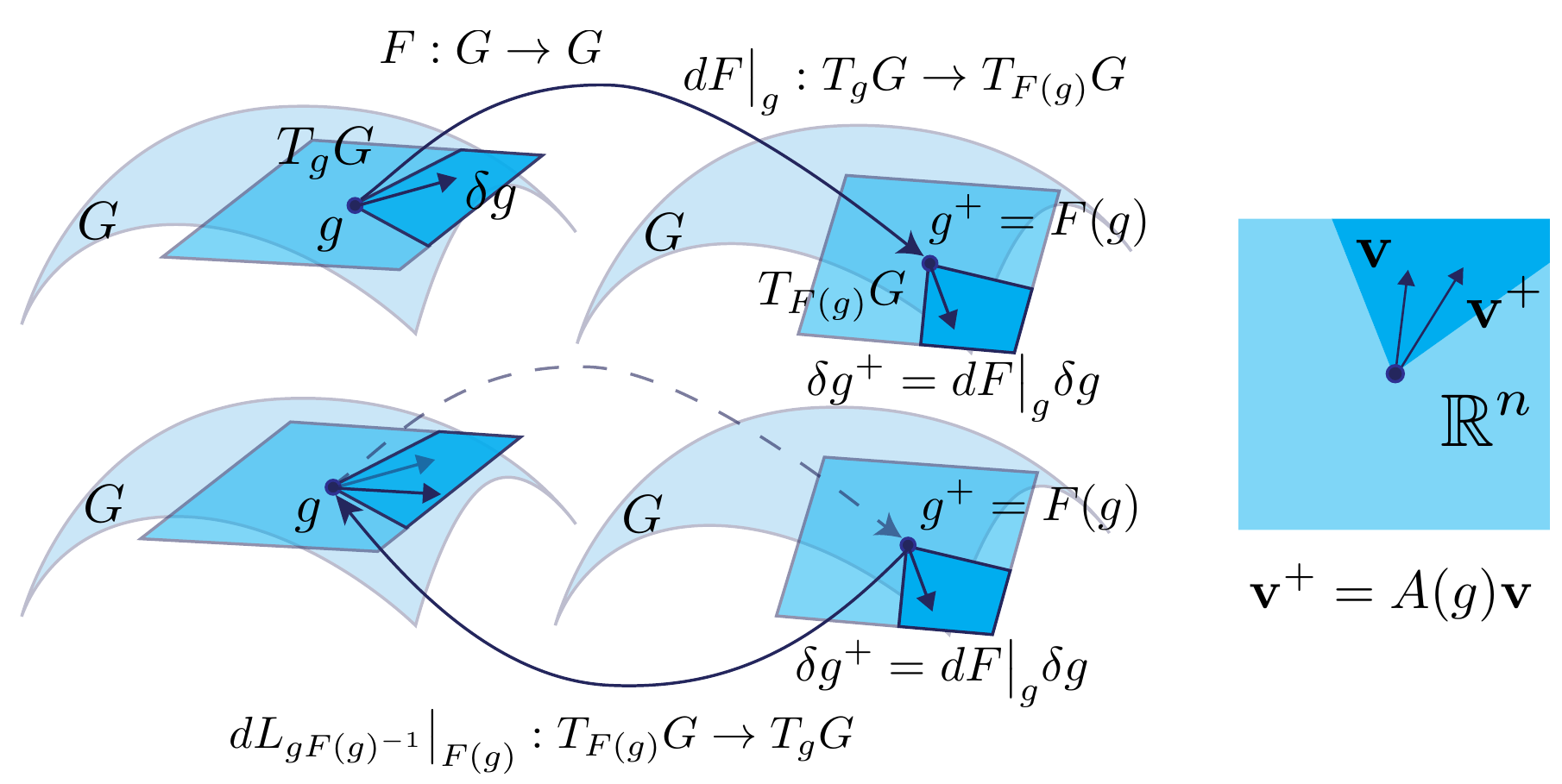}
  \caption{Invariant differential positivity of a discrete-time dynamical system $g^+=F(g)$ on a Lie group $G$, with respect to a left-invariant cone field. Invariant differential positivity can be reformulated as the positivity of a collection of linear maps $\{A(g):\mathbb{R}^n\rightarrow\mathbb{R}^n\mid g\in G\}$ with respect to a constant cone $\mathcal{K}$ in $\mathbb{R}^n$.
  }
  \label{fig:invdiffpos}
\end{figure}

Now consider a continuous-time dynamical system $\Sigma$ on $G$ given by 
$
\dot{g}= f(g)$,
where $f$ is a smooth vector field that assigns a vector $f(g):=X_g\in T_g G$ to each point $g\in G$. 
Let $\psi_t(g)$ denote the trajectory of $\Sigma$ at time $t\in\mathbb{R}$ with initial point $g\in G$. Note that the flow $\psi_t:G\rightarrow G$ is a diffeomorphism with differential $d\psi_t\vert_g:T_gG\rightarrow T_{\psi_t(g)}G$. The linearization of $\Sigma$ with respect to a left-invariant frame on $G$ takes the form
\begin{equation} \label{6.3}
\frac{d}{dt}\delta g=\lim_{t\rightarrow 0}\frac{dL_{g \psi_t(g)^{-1}}\vert_{\psi_t(g)}\circ d\psi_t\vert_g\,\delta g -\delta g}{t}.
\end{equation}
To see this, note that the tangent vector $\delta g\in T_gG$ evolves under the flow to $d\psi_t\vert_g\delta g\in T_{\psi_t(g)}G$, which is then pulled back to $T_gG$ using $dL_{g \psi_t(g)^{-1}}\vert_{\psi_t(g)}$ by left-invariance, in order to compute the derivative $\dot{\delta g}$. Let $\{E_i\}$ denote a left-invariant frame consisting of a collection of left-invariant vector fields on $G$ such that $\{E_i\vert_g\}$ forms a basis of $T_gG$. For each $t>0$, we can write $\delta g (t) := d\psi_t\vert_g\delta g = \sum_i(\delta g(t))^i E_i\vert_{\psi_t(g)}$, for some $(\delta g(t))^i\in\mathbb{R}$. The expression in (\ref{6.3}) becomes
\begin{align} 
\frac{d}{dt}\delta g&=\lim_{t\rightarrow 0}\frac{dL_{g \psi_t(g)^{-1}}\vert_{\psi_t(g)}\sum_i(\delta g(t))^iE_i\vert_{\psi_t(g)}  -\sum_i(\delta g(0))^iE_i\vert_{g}}{t} \\
&=\lim_{t\rightarrow 0}\left(\frac{\sum_i(\delta g(t))^i  -\sum_i(\delta g(0))^i}{t}\right)E_i\vert_{g}. \label{components Cartan 1}
\end{align}

\begin{remark}
If the vector field $f(g)=X_g$ is left-invariant so that 
$
X_g=dL_g\vert_eX_e, 
$
where $X_e\in T_eG$ is the vector at identity, then the solution $\psi_t(e)$ of $\dot{g}=X_g$ through the identity element $e$ is given by
$
\psi_t(e)=\exp(tX_e)
$
and the flow $\psi_t$ now takes the form
$
\psi_t(g)=L_g\psi_t(e)= L_g\exp(tX_e)$,
by the left-invariance of $X_g$. The linearized dynamics reduces to
\begin{equation}
\frac{d}{dt}\delta g =\lim_{t\rightarrow 0}\frac{dL_{g \psi_t(g)^{-1}}\vert_{\psi_t(g)}\circ dL_{\psi_t(g) g^{-1}}\vert_g\delta g-\delta g}{t}=0.
\end{equation}
Assuming that the Lie group is endowed with a left-invariant cone field $\mathcal{C}(g):=dL_g\vert_e\mathcal{C}$, we see that the system is differentially positive when the flow is given by a left-invariant vector field. Specifically, we have
$
d\psi_t\vert_g\mathcal{C}(g)=\mathcal{C}\left(\psi_t(g)\right)$,
for all $t\in\mathbb{R}$ and $g\in G$. Note that the differential positivity of the system is not strict. The differential positivity of a left-invariant vector field with respect to a left-invariant cone field on a Lie group is the Lie group analogue of the trivial case of differential positivity of a uniform vector field in $\mathbb{R}^n$ 
with respect to a constant cone field.
\end{remark}

The linearization with respect to a left-invariant frame given in (\ref{6.3}) coincides precisely with the linearization obtained through covariant differentiation using the left Cartan connection on $G$. 
To see this, not that given a tangent vector $\delta g\in T_g G$, the covariant derivative
\begin{equation} \label{left covariant}
\left(\nabla_{\delta g} X\right)\big\vert_g\in T_g G,
\end{equation}
is a measure of the change in the vector field $X$ in the direction of $\delta g$ at $g\in G$. Note that we can express the vector field $X$ with respect to a left-invariant frame $\{E_i\}$ as $X_g=\sum_i X^i(g)E_i\vert_g$, where $X^i:G\rightarrow \mathbb{R}$ are smooth functions. By the properties of affine connections, we have
\begin{align}
\nabla_{\delta g}X\vert_g&= \nabla_{\delta g}\left(\sum_i X^i(g)E_i\vert_g\right) =\sum_i \left(\delta g(X^i)(g)E_i\vert_g +   X^i(g)\nabla_{\delta g}E_i\vert_g\right) \\
&= \sum_i(dX^i\vert_g\delta g)E_i\vert_g,  \label{components Cartan 2}
\end{align}
where we have used the fact that $\nabla_{\delta g}E_i = 0$ as  $\nabla$ is the left Cartan connection and $E_i$ is a left-invariant vector field. Clearly the components in (\ref{components Cartan 1}) and (\ref{components Cartan 2}) agree as $\psi_t$ is the flow defined by the vector field $X$.

Now (\ref{left covariant}) defines a linear operator in $\delta g$ at any fixed $g\in G$, which we denote by $\hat{A}(g):T_gG\rightarrow T_gG$. Furthermore, through identification of $\mathfrak{g}=T_eG$ and $\mathbb{R}^n$ via the vectorization map $^{\vee}$, we can equivalently consider a collection of linear maps $A(g):\mathbb{R}^n\rightarrow \mathbb{R}^n$ and note that 
differential positivity with respect to a left-invariant cone field generated by a cone  $\mathcal{C}\subset\mathbb{R}^n$ reduces to the positivity of the linear map
\begin{equation}
\dot{\mathbf{v}}=A(g)\mathbf{v}, \quad \mathbf{v}\in\mathbb{R}^n
\end{equation}
with respect to $\mathcal{C}$ for all $g\in G$. 
This choice of covariant differentiation precisely yields the notion of linearization that we need for the study of invariant differential positivity with respect to a left-invariant cone field, since it corresponds to the unique connection with respect to which all left-invariant vector fields are covariantly constant. In particular, note that for a left-invariant vector field $X$, the linearization $\hat{A}(g)$ is immediately seen to be null since $\hat{A}(g)\delta g = \nabla_{\delta g} X = 0$. Analogously, the right Cartan connection can be used to derive the appropriate linearization for the study of differential positivity with respect to right-invariant cone fields.

A key contribution of \cite{Forni2015} is the generalization of Perron-Frobenius theory of a linear system that is positive with respect to a closed, convex, and pointed cone 
 to nonlinear systems within a differential framework, whereby the the Perron-Frobenius eigenvector of linear positivity theory is replaced by a Perron-Frobenius vector field $w(x)$ whose integral curves shape the attractors of the system. The main result on closed differentially positive systems is that the asymptotic behavior is either captured by a Perron-Frobenius curve $\gamma$ such that 
$
\gamma'(s)=w(\gamma(s))
$
at every point on $\gamma$; or is the union of the limit points of a trajectory that is nowhere aligned with the Perron-Frobenius vector field, which is a highly non-generic situation.

In Section \ref{circle}, we will make use of the following theorem to study consensus on the circle. The theorem is a special case of Theorem 5 of  \cite{Forni2015a} applied to invariant cone fields on Lie groups. 

 \begin{theorem} \label{thm compact}
 Let $\Sigma$ be a uniformly strictly differentially positive system with respect to a left-invariant cone field $\mathcal{K}$ of closed, convex, and pointed cones in a bounded, connected, and forward invariant region $S\subseteq G$ of a Lie group $G$ equipped with a left-invariant Finsler metric. If the normalized Perron-Frobenius vector field $w$ is left-invariant, i.e. $w(g)=dL_g\vert_e w(e)$, and satisfies
$\limsup_{t\rightarrow\infty}\|d\psi_t\vert_g w(g)\|_{\psi_t(g)}<\infty$,
then there exists a unique integral curve of $w$ that is an attractor for all the trajectories of $\Sigma$ from $S$. Moreover, the attractor is a left translation of a subgroup of $G$ that is isomorphic to $\mathbb{S}^1$.
 \end{theorem}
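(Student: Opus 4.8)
The plan is to invoke Theorem 5 of \cite{Forni2015a} (the general differential Perron--Frobenius result for uniformly strictly differentially positive systems on compact/forward-invariant regions) and then upgrade its conclusion using the left-invariance hypothesis on $w$. First I would verify that the hypotheses of that general theorem are met: $S$ is bounded, connected, and forward invariant; $G$ carries a left-invariant Finsler metric, which restricts to a Finsler metric on $S$; and $\Sigma$ is uniformly strictly differentially positive with respect to the cone field $\mathcal{K}$. The general theorem then provides that every trajectory from $S$ is asymptotic either to an integral curve of the Perron--Frobenius vector field $w$, or (non-generically) to the limit set of a trajectory nowhere aligned with $w$. The boundedness assumption $\limsup_{t\to\infty}\|d\psi_t\vert_g\,w(g)\|_{\psi_t(g)}<\infty$ rules out the degenerate alternative and forces the Perron--Frobenius integral curve to be bounded and hence to have a nonempty $\omega$-limit set inside $S$; the contraction of the cone field then pins the attractor to a single integral curve of $w$, giving uniqueness.

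Next I would identify the geometry of this attracting integral curve. Since $w$ is left-invariant, $w(g)=dL_g\vert_e\,w(e)$, so $w$ is precisely a left-invariant vector field on $G$, i.e. an element $X:=w(e)\in\mathfrak{g}$. Its integral curve through $e$ is, by the discussion of one-parameter subgroups in the preliminaries, the curve $t\mapsto\exp(tX)=\varphi_X(t)$, which is a one-parameter subgroup of $G$; the integral curve through an arbitrary $g$ is the left translate $t\mapsto g\exp(tX)=L_g\varphi_X(t)$. Thus the attractor is $L_g(H_0)$ where $H_0=\{\exp(tX):t\in\mathbb{R}\}$ is the one-parameter subgroup generated by $X$.

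It remains to argue that this subgroup is isomorphic to $\mathbb{S}^1$ rather than to $(\mathbb{R},+)$. Here I would use that the attractor lies in the bounded region $S$: the closure $\overline{H_0}$ is a bounded, hence compact, connected abelian Lie subgroup of $G$, so it is a torus $\mathbb{T}^m$ for some $m\geq 0$. If $X=0$ the attractor is a point, but then $w$ would vanish, contradicting that $w$ is a (normalized, nonvanishing) Perron--Frobenius vector field; so $m\geq 1$. A one-parameter subgroup of a torus $\mathbb{T}^m$ is dense iff its generator has irrationally related coordinates; but the general theorem produces a \emph{closed} attracting integral curve (it is the $\omega$-limit set of bounded trajectories and is itself invariant and compact), which forces the one-parameter subgroup to be closed, hence a circle $\mathbb{S}^1$. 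Therefore the attractor is $L_g(\mathbb{S}^1)$, a left translation of a subgroup isomorphic to $\mathbb{S}^1$.

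The main obstacle is the last step: reconciling the abstract conclusion of \cite{Forni2015a} (existence of an attracting Perron--Frobenius integral curve) with the requirement that this curve closes up into a circle. One must argue carefully that the attracting set is closed and one-dimensional and contained in $S$, and then combine this with the structure theory of one-parameter subgroups of compact abelian groups to exclude the irrational-winding case; the boundedness hypothesis on $d\psi_t\vert_g w(g)$ is exactly what is needed to ensure the Perron--Frobenius curve does not escape and does not collapse, so that it has the compact $\omega$-limit behavior that forces closure. Verifying that $w$ being left-invariant is consistent with its defining property (that $d\psi_t$ asymptotically aligns with $w$) — rather than merely assumed — is a secondary point that follows from uniqueness of the dominant direction in Theorem~\ref{PF k}-type reasoning applied pointwise along trajectories.
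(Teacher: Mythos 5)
Your proposal is correct and follows the same route the paper itself announces for Theorem \ref{thm compact}: invoke Theorem 5 of \cite{Forni2015a} on the bounded, connected, forward-invariant set $S$, then use left-invariance of $w$ to identify its integral curves as left translates $t\mapsto g\exp(t\,w(e))$ of the one-parameter subgroup generated by $w(e)$. The paper, however, also supplies its own self-contained argument: Theorem \ref{thm compact} is recovered as the $k=1$ case of Theorem \ref{thm compact2}, whose proof does not cite \cite{Forni2015a} but instead pulls the differential $d\psi_t\vert_g$ back to $T_gG$ by left translation, applies the generalized Perron--Frobenius theorem (Theorem \ref{PF k}) pointwise to obtain dominant and transverse subspaces $\mathcal{W}_1^{g,t}\oplus\mathcal{W}_2^{g,t}$, and shows that the ratio $\Phi_g(\delta g)=\|\delta g_2\|_g/\|\delta g_1\|_g$ contracts geometrically by uniform strict positivity. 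In that argument the hypothesis $\limsup_{t\rightarrow\infty}\|d\psi_t\vert_g w(g)\|_{\psi_t(g)}<\infty$ plays a more specific role than the one you assign it: it bounds the dominant component $\|\Gamma_{g,t}\delta g_1\|_g$ so that $\Phi_g\rightarrow 0$ upgrades to genuine convergence of the transverse component to zero, rather than ``ruling out the non-generic alternative'' as you frame it; uniqueness then follows from connectedness of $S$ by flowing a curve joining two putative attractors. Conversely, your closing argument --- that the closure of the bounded one-parameter subgroup is a torus and that closedness of the attracting curve excludes irrational winding, forcing $\mathbb{S}^1$ --- fills in a step the paper leaves implicit when it merely asserts that the attractor is a left translation of the connected subgroup corresponding to the subalgebra spanned by $w(e)$.
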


It should be noted that strict differential positivity with respect to a left-invariant cone field does not necessarily imply the existence of a left-invariant Perron-Frobenius vector field. However, such a special case is particularly tractable for analysis and results in elegantly simple attractors which prove sufficient for the analysis of the consensus problems we study in this paper. The following example from \cite{Forni2014a} concerns a system that is differentially positive with respect to a left-invariant cone field on a Lie group, whose Perron-Frobenius vector field is not left-invariant.

\subsection{Example: nonlinear pendulum}

Here we briefly review the differential positivity of the classical nonlinear planar pendulum equation which takes the form of a flow on the cylinder $G=\mathbb{S}^1\times\mathbb{R}$. Thinking of $\mathbb{S}^1$ as being embedded in the complex plane, we can represent elements of the Lie group $G$ by $(e^{i\theta},v)$. The pendulum equation can be written in the form $\dot{g}=dL_g\vert_e\Omega(g)$, where $\Omega:G\rightarrow\mathfrak{g}$ is specified by $\Omega^{\vee}=(\Omega_1,\Omega_2)^T$, where $\Omega_1$ is a purely imaginary number and $\Omega_2$ is real. We have

\begin{equation}
{d\over dt}
\begin{pmatrix}
e^{i\theta} \\
v
\end{pmatrix}
= \begin{pmatrix}
e^{i\theta} & 0 \\
0 & 1
\end{pmatrix}
\begin{pmatrix}
\Omega_1 \\
\Omega_2 
\end{pmatrix}
\end{equation}
for $\Omega_1=iv$, 
$\Omega_2 = -\sin\theta-\rho v+u$,
where $\rho\geq 0$ is the damping coefficient, and $u$ is a constant torque input. Thus, the linearized dynamics at point $(e^{i\theta},v)$ is governed by the linear map $A(g):\mathbb{R}^2\rightarrow\mathbb{R}^2$ given by
\begin{equation}
A(g)
=\begin{pmatrix} 0  & 1 \\
-\cos\theta & -\rho 
\end{pmatrix}
\end{equation}
It is easy to verify that the map $A(g):\mathbb{R}^2\rightarrow\mathbb{R}^2$ is strictly positive with respect to the cone
$\mathcal{K}:=\{(\mathrm{v}_1,\mathrm{v}_2)\in \mathbb{R}^2:\mathrm{v}_1\geq 0,\; \mathrm{v}_1+\mathrm{v}_2 \geq 0\}$,
for $\rho > 2$ by showing that at any point on the boundary of the cone $\mathcal{K}$, the vector $A(g)(\mathrm{v}_1,\mathrm{v}_2)^T$ is oriented towards the interior of the cone for any $g\in G$. It can also be shown that every trajectory belongs to a forward invariant set $S\subseteq G$ such that $(\Omega_1,\Omega_2)\in\operatorname{int}\mathcal{K}$ after a finite amount of time. Differential positivity can now be used to establish the existence of a unique attractive limit cycle in $S$. See \cite{Forni2014a} for details. Thus we see that the nonlinear pendulum model defines a flow that is monotone in the sense of invariant differential positivity. 

\begin{remark}
An important difference between transversal contraction theory \cite{Manchester2014, Forni2014Lyap} and differential positivity is that in transversal contraction theory the dominant distribution must be known precisely everywhere in order to define the necessary contraction metric, whereas in differential positivity theory it is not necessary to have a precise knowledge of the dominant or Perron-Frobenius distribution to be able to conclude the existence of an attractor. A clear illustration of this is provided by the nonlinear pendulum model discussed above, which is shown to have a limit cycle through strict differential positivity, without any precise knowledge of the dominant distribution itself. 
\end{remark}

\subsection{Invariant differential positivity with respect to cones of rank $k$}

By replacing the classical Perron-Frobenius theorem for systems that are positive with respect to closed, convex, and pointed cones with the generalization provided by Theorem \ref{PF k},
and the notion of strict differential positivity with respect to a closed, convex, and pointed cone field with that of uniform strict differential positivity with respect to a cone field of rank $k$, we arrive at a generalization of differential Perron-Frobenius theory whereby the attractors of the system are shaped not by a Perron-Frobenius vector field, but by a smooth distribution of rank $k$. See Figure~\ref{fig:highrank}.

  \begin{figure}
\centering
\includegraphics[width=0.5\linewidth]{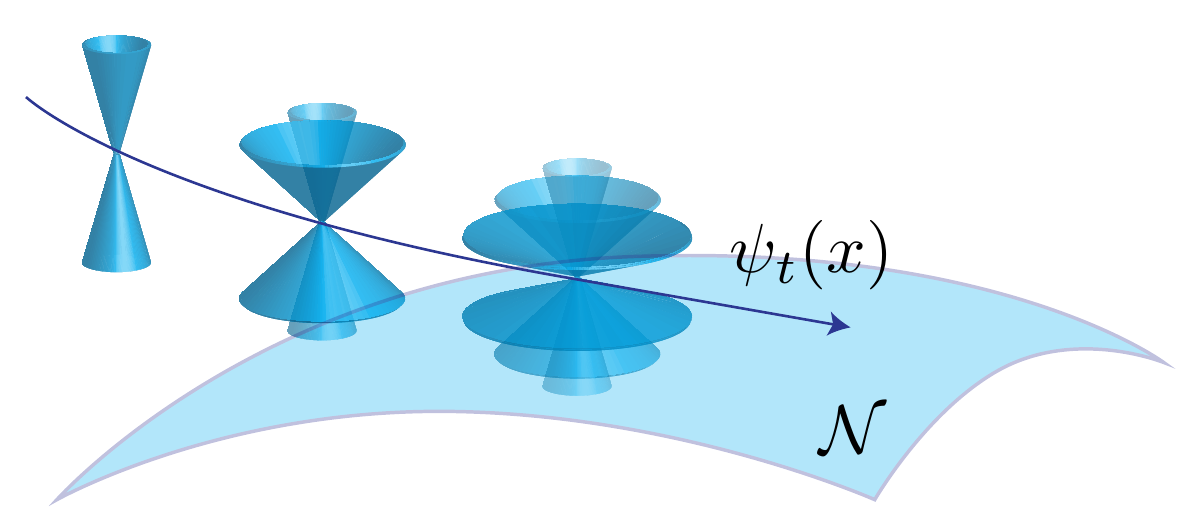}
  \caption{Asymptotic convergence to a two-dimensional integral submanifold $\mathcal{N}$ for a dynamical system that is strictly differentially positive with respect to a cone field of rank $2$.
  }
  \label{fig:highrank}
\end{figure}

Recall that a smooth distribution $\mathcal{D}$ of rank $k$ on a smooth manifold $\mathcal{M}$ is a rank-$k$ smooth subbundle of $T\mathcal{M}$. A rank $k$ distribution is often described by specifying a $k$-dimensional linear subspace $\mathcal{D}_x\subseteq T_x\mathcal{M}$ at each point $x\in\mathcal{M}$, and writing $\mathcal{D}=\cup_{x\in\mathcal{M}}\mathcal{D}_x$. It follows from the local frame criterion for subbundles that $\mathcal{D}$ is a smooth distribution if and only if each point $x\in\mathcal{M}$ has a neighborhood $\mathcal{U}$ on which there are smooth vector fields $X_1,..., X_k$ such that $\{X_j\vert_{\tilde{x}}:j=1,...,k\}$ forms a basis for $\mathcal{D}_{\tilde{x}}$ at each point $\tilde{x}\in\mathcal{U}$ \cite{Lee2003}. The distribution $\mathcal{D}$ is then said to be locally spanned by the vector fields $X_j$.

Given a smooth distribution $\mathcal{D}\subseteq T\mathcal{M}$, a nonempty immersed submanifold $\mathcal{N}\subseteq\mathcal{M}$ is said to be an \emph{integral manifold} of $\mathcal{D}$ if 
\begin{equation}
 T_x\mathcal{N}=\mathcal{D}_x \quad \forall x\in\mathcal{N}.
\end{equation}
The question of whether for a given distribution there exists an integral manifold is intimately connected to the notion of \emph{involutivity} and characterized by the \emph{Frobenius theorem}. A distribution $\mathcal{D}$ is said to be involutive if given any pair of smooth vector fields $X,Y$ defined on $\mathcal{M}$ such that $X_x,Y_x\in\mathcal{D}_x$ for each $x\in\mathcal{M}$, the Lie bracket $[X,Y]\vert_x$ also lies in $\mathcal{D}_x$. By the local frame criterion for involutivity, one can show that a distribution $\mathcal{D}$ is involutive if
there exists a smooth local frame $\{X_j:j=1,...,k\}$ for $\mathcal{D}$ in a neighborhood of every point in $\mathcal{M}$ such that $[X_i,X_j]$ is a section of $\mathcal{D}$ for each $i,j=1,...,k$. The Frobenius theorem tells us that involutivity of a distribution is a necessary and sufficient condition for the existence of an integral manifold through every point \cite{Lee2003}. The following result is a generalization of Theorem \ref{thm compact} to systems that are invariantly differentially positive with respect to higher rank cone fields on Lie groups.

 \begin{theorem} \label{thm compact2}
 Let $\Sigma$ be a uniformly strictly differentially positive system with respect to a left-invariant cone field $\mathcal{C}$ of rank $k$ in a bounded, connected, and forward invariant region $S\subseteq G$ of a Lie group $G$ equipped with a left-invariant Finsler metric. If there exists an involutive
distribution $\mathcal{D}$ satisfying $\mathcal{D}_g\in\operatorname{int}\mathcal{C}(g)\setminus\{0_g\}$ such that for every $w(g)\in\mathcal{D}_g$
 \begin{equation}
\limsup_{t\rightarrow\infty}\|d\psi_t\vert_g w(g)\|_{\psi_t(g)}<\infty,
 \end{equation}
 and for all $g\in S$ and $t\geq 0$:
 \begin{equation}
 \mathcal{D}_{\psi_t(g)} = d\psi_t\vert_g \mathcal{D}_g,
 \end{equation}
 then there exists an integral manifold $\mathcal{N}$ of $\mathcal{D}$ that is an attractor for all the trajectories of $\Sigma$ from $S$.
 \end{theorem}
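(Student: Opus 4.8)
The plan is to reduce the statement to an application of Theorem~\ref{thm compact} (the rank~$1$ case) by passing to an appropriate quotient or by exploiting the integrability of $\mathcal{D}$ directly. First I would use the Frobenius theorem: since $\mathcal{D}$ is an involutive smooth distribution of rank $k$ on $G$, through every point $g \in S$ there is a unique maximal connected integral manifold, and these integral manifolds foliate a neighbourhood of $S$. The invariance hypothesis $\mathcal{D}_{\psi_t(g)} = d\psi_t\vert_g \mathcal{D}_g$ says precisely that the flow $\psi_t$ permutes the leaves of this foliation; hence $\Sigma$ descends to a well-defined flow on the (locally defined) leaf space $S/\mathcal{D}$, which is a manifold of dimension $n-k$. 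The key observation is that the condition $\mathcal{D}_g \subset \operatorname{int}\mathcal{C}(g)\setminus\{0_g\}$ together with the interpretation of a rank-$k$ cone as (roughly) the complement of a rank-$(n-k)$ cone lets one push the cone field $\mathcal{C}$ down to a cone field $\bar{\mathcal{C}}$ of rank $1$ — equivalently a pointed convex cone pair — on the quotient: the subspace $\mathcal{D}_g$ plays the role of the "dominant" part that gets collapsed, and the induced quotient cone is closed, convex, and pointed by Theorem~\ref{PF k} applied fibrewise to $d\psi_t$, which splits $T_gG$ as $\mathcal{W}_1(g)\oplus\mathcal{W}_2(g)$ with $\mathcal{W}_1(g) = \mathcal{D}_g$.

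Next I would verify that the induced quotient system is uniformly strictly differentially positive with respect to $\bar{\mathcal{C}}$ and with respect to the induced (left-invariant) Finsler metric on the quotient: the uniform strict positivity of $\Sigma$ for $\mathcal{C}$, read through the splitting $T_gG = \mathcal{D}_g \oplus \mathcal{W}_2(g)$, gives a uniform contraction of the transverse cone field $\bar{\mathcal{C}}$, because the spectral gap $|\lambda_1| > |\lambda_2|$ in Theorem~\ref{PF k} is exactly what controls the transverse dynamics once the $\mathcal{D}$-direction is quotiented out. The boundedness and forward invariance of $S$ descends to the quotient, and the $\limsup$ boundedness hypothesis on $\|d\psi_t\vert_g w(g)\|$ for $w(g) \in \mathcal{D}_g$ is the lift of the corresponding hypothesis on the normalized Perron–Frobenius vector field of the quotient system (it says the $\mathcal{D}$-directions are not expanding faster than the flow on the quotient). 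Applying Theorem~\ref{thm compact} to the quotient system then yields a unique attracting equilibrium $\bar{g}^*$ for the quotient flow — or, more precisely, since the quotient need only be locally defined and Theorem~\ref{thm compact} gives an $\mathbb{S}^1$-attractor in the Lie-group setting, one gets an attracting one-dimensional orbit downstairs; either way, the preimage of this attractor under the quotient map $S \to S/\mathcal{D}$ is a union of leaves of the foliation, and forward invariance singles out the leaf $\mathcal{N}$ through a fixed point. This $\mathcal{N}$ is an integral manifold of $\mathcal{D}$ of dimension $k$, and the contraction transverse to $\mathcal{N}$ furnished by the quotient attractivity shows $\mathcal{N}$ attracts every trajectory from $S$.

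The main obstacle I expect is making the passage to the quotient fully rigorous while staying inside the invariant/Finslerian framework: the leaf space $S/\mathcal{D}$ need not be Hausdorff or even a manifold globally, so one must work with a local foliated chart (a submersion $\pi\colon \mathcal{U} \to \mathbb{R}^{n-k}$ whose fibres are plaques of $\mathcal{D}$) and argue that the dynamics, the cone field, and the Finsler metric all descend consistently, then glue. A second technical point is showing that the quotient of the rank-$k$ cone field by $\mathcal{D}$ is genuinely a field of closed convex pointed cones — this requires $\mathcal{D}_g$ to be not merely inside $\operatorname{int}\mathcal{C}(g)$ but to coincide with the invariant $k$-dimensional subspace $\mathcal{W}_1(g)$ produced by the differential Perron–Frobenius splitting, which is why the invariance hypothesis $\mathcal{D}_{\psi_t(g)} = d\psi_t\vert_g\mathcal{D}_g$ is essential and must be invoked before the cone can be pushed down. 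Once these two points are handled, the rest is a direct transcription of the proof of Theorem~\ref{thm compact}, now carried out transverse to the foliation, with the Finsler distance on the quotient providing the Lyapunov function that drives convergence to $\mathcal{N}$.
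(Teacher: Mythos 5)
Your reduction to Theorem~\ref{thm compact} via a quotient by the foliation has a genuine gap at its central step: the pushforward of a rank-$k$ cone field along the quotient map $S\to S/\mathcal{D}$ is not a pointed convex cone field --- in general it is not a proper cone at all. Take the model quadratic cone $\mathcal{C}(P)=\{x:x_1^TP_1x_1-x_2^TP_2x_2\geq 0\}$ with dominant subspace $\mathcal{W}_1=\{x_2=0\}$: every coset $x_2+\mathcal{W}_1$ meets $\mathcal{C}(P)$ (choose $x_1$ with $x_1^TP_1x_1\geq x_2^TP_2x_2$), so the image of $\mathcal{C}(P)$ in the quotient is the entire quotient space. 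The same happens for the polyhedral rank-$k$ cones of the paper. So the object $\bar{\mathcal{C}}$ you want to feed into Theorem~\ref{thm compact} does not exist; what the transverse dynamics actually inherits is a contraction (all $\mathcal{W}_2$-directions are dominated), which is a metric statement, not a positivity statement. Compounding this, the leaf space is not a Lie group --- Theorem~\ref{thm compact2} does not even assume $\mathcal{D}$ is left-invariant, so the leaves need not be cosets of a subgroup --- hence the hypotheses of Theorem~\ref{thm compact} (left-invariant cone field, left-invariant Perron--Frobenius vector field) cannot be met downstairs; and that theorem produces an $\mathbb{S}^1$-attractor where your argument would need a fixed point, a tension you note but do not resolve.

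The paper's proof avoids the quotient entirely. It works directly with the maps $\Gamma_{g,t}=dL_{g\psi_t(g)^{-1}}\vert_{\psi_t(g)}\circ d\psi_t\vert_g$ on $T_gG$, invokes Theorem~\ref{PF k} to split $T_gG=\mathcal{W}_1^{g,t}\oplus\mathcal{W}_2^{g,t}$ with a spectral gap, and shows that the ratio $\Phi_g(\delta g)=\|\delta g_2\|_g/\|\delta g_1\|_g$ contracts by a uniform factor $\nu<1$ over each time window of length $T$, hence tends to zero. The $\limsup$ hypothesis is then used not as a ``lift of the quotient hypothesis'' but to upgrade this projective statement to genuine decay of the transverse component, $\Gamma_{g,t}\delta g_2\to 0$; without it both components could grow while the ratio still vanishes. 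Vectors outside $\mathcal{C}(g)$ are handled by perturbing with $\alpha w_1\in\mathcal{W}_1^{g,t}$, and uniqueness of the attracting leaf follows from connectedness of $S$. If you want to salvage your strategy, you must replace the quotient cone by this direct ratio/contraction argument transverse to $\mathcal{D}$ --- at which point you have reproduced the paper's proof rather than reduced the statement to Theorem~\ref{thm compact}.
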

 
\begin{proof}
For any $g\in G$, $t>0$, define the linear map $\Gamma_{g,t}:T_gG\rightarrow T_gG$ by
\begin{equation}
\Gamma_{g,t}(\delta g)=\left(dL_{g\psi_t(g)^{-1}}\big\vert_{\psi_t(g)}\circ d\psi_t\big\vert_g\right)(\delta g).
\end{equation}
By strict differential positivity with respect to the left-invariant cone field $\mathcal{C}$ of rank $k$, there exist unique $\Gamma_{g,t}$-invariant  subspaces  $\mathcal{W}_1^{g,t}$ and $\mathcal{W}_2^{g,t}$ of $T_g G$ such that $\dim \mathcal{W}_1^{g,t} = k$, $\dim \mathcal{W}_2^{g,t} = n-k$, $T_g G=\mathcal{W}_1^{g,t} \oplus \mathcal{W}_2^{g,t}$, and $\mathcal{W}_1^{g,t}\subset \operatorname{int} \mathcal{C}(g)$, $\mathcal{W}_2^{g,t}\cap\mathcal{C}=\{0_g\}$ according to Theorem \ref{PF k}. Moreover, $|\lambda_1^{g,t}| > |\lambda_j^{g,t}|$ for $j\neq 1$, where $\lambda_1^{g,t}$ denotes a $\mathcal{W}_1^{g,t}$-eigenvalue of $\Gamma_{g,t}$ and $\lambda_j^{g,t}$ denote eigenvalues of $\Gamma_{g,t}$ corresponding to $\mathcal{W}_2^{g,t}$ for $j\neq 1$. For any $\delta g \in T_gG$, we write $\delta g = \delta g_1 + \delta g_2$, where $\delta g_1\in \mathcal{W}_1^{g,t}$, $\delta g_2\in\mathcal{W}_2^{g,t}$. Define the map $\Phi_g:\mathcal{C}(g)\setminus\{0_g\}\rightarrow \mathbb{R}^{\geq0}$ by 
\begin{equation}
\Phi_g(\delta g)=\frac{\|\delta g_2\|_g}{\|\delta g_1\|_g}.
\end{equation}
It is clear that this map is well-defined since $\delta g_1\neq 0_g$ for $\delta g \in \mathcal{C}(g)\setminus\{0_g\}$. We have
\begin{equation}
\Phi_g\left(\Gamma_{g,t}\delta g\right) = \frac{\|\Gamma_{g,t}\delta g_2\|_g}{\|\Gamma_{g,t}\delta g_1\|_g} \leq \frac{\max_{j\neq 1}|\lambda_j^{g,t}|}{|\lambda_1^{g,t}|}\frac{\|\delta g_2\|_g}{\|\delta g_1\|_g} < \Phi_g(\delta g).
\end{equation}
Moreover, since $\Sigma$ is assumed to be uniformly strictly differentially positive, there exists $T>0$ and $\nu\in(0,1)$, such that
$\Phi_g\left(\Gamma_{g,t}\delta g \right) \leq \nu \Phi_g(\delta g)$ for all $ t\geq T$. Thus, we have 
\begin{equation}
\Phi_g\left(\Gamma_{g,t}\delta g\right) \leq \nu^n \Phi_g(\delta g), \quad \forall t\geq nT.
\end{equation}
Letting $n\rightarrow \infty$, we see that $\Phi_g\left(\Gamma_{g,t}\delta g\right)\rightarrow 0$ as $t \rightarrow \infty$ for any $\delta g \in \mathcal{C}(g)\setminus \{0_g\}$.

Now if $w(g) \in \mathcal{D}_g$, then  $\limsup_{t\rightarrow\infty}\|d\psi_t\vert_g w(g)\|_{\psi_t(g)}<\infty$ by assumption, which is equivalent to $\limsup_{t\rightarrow\infty}\|\Gamma_{g,t} w(g)\|_g < \infty$ by invariance of the Finsler metric. In particular, for any $\delta g\in \mathcal{C}(g)\setminus\{0_g\}$, we have $\limsup_{t\rightarrow\infty}\|\Gamma_{g,t} \delta g_1\|_g < \infty$, and so $\lim_{t\rightarrow\infty}\Phi_g\left(\Gamma_{g,t}\delta g\right)=0$ implies that $\lim_{t\rightarrow\infty}\Gamma_{g,t}\delta g_2=0$ by completeness. If $\delta g\notin \mathcal{C}(g)$, then for some $\alpha >0$ and $w_1\in\mathcal{W}_1^{g,t}$, we have $\delta g + \alpha w_1\in \mathcal{C}(g)\setminus\{0_g\}$ and $\lim_{t\rightarrow\infty}\Phi_g\left(\Gamma_{g,t}\delta g+\alpha w_1\right)=0$, which implies that $\lim_{t\rightarrow\infty}T_{g,t}\delta g_2=0$ once again. Thus, in the limit of $t\rightarrow \infty$, $\delta g(t) = d\psi_t\vert_g\delta g$ becomes parallel to $\mathcal{D}_{\psi_t(g)}$. 

To prove uniqueness of the attractor, assume for contradiction that $\mathcal{N}_1$ and $\mathcal{N}_2$ are two distinct attractive integral manifolds of $\mathcal{D}$ and let $g_1\in \mathcal{N}_1$, $g_2\in\mathcal{N}_2$. By connectedness of $S$, there exists a smooth curve $\gamma$ in S connecting $g_1$ and $g_2$. Since the curve $\psi_t(\gamma(s))$ converges to an integral manifold of $\mathcal{D}$, $\mathcal{N}_1$ and $\mathcal{N}_2$ must be subsets of the same integral manifold of $\mathcal{D}$, which provides the contradiction that completes the proof.
\end{proof}

A special case of interest arises when the distribution $\mathcal{D}$ of dominant modes of the system is itself a left-invariant distribution, i.e. $\mathcal{D}_g=dL_g\vert_e\mathcal{D}_e$ for all $g\in G$. This property is satisfied for the consensus examples covered in sections \ref{circle} and \ref{SO(3)}. In such cases, the distribution is involutive precisely if $\mathcal{D}_e$ forms a subalgebra $\mathfrak{h}$ of the Lie algebra $\mathfrak{g}$ of $G$. Furthermore, the resulting integral manifold of $\mathcal{D}$ which acts as an attractor in $S$ is a left translation of the unique connected Lie subgroup $H$ of $G$ corresponding to $\mathfrak{h}$. Indeed, Theorem \ref{thm compact} follows as precisely such a special case of Theorem \ref{thm compact2} for $k=1$.

\section{Differential positivity and consensus on the circle}  \label{circle}

Throughout this section, we assume that a cone is closed, convex, and pointed.

\subsection{Differential positivity on the $N$-torus $\mathbb{T}^N$}  \label{diff1}

We consider differential positivity of consensus protocols on the $N$-torus $\mathbb{T}^N$ involving coupling functions $f_{ki}$ associated to edges $(k,i)$. First consider the dynamics
\begin{equation}    \label{asymmetricS1}
\dot{\vartheta}_k=\sum_{i:(k,i)\in\mathcal{E}}f_{ki}(\vartheta_i-\vartheta_k),
\end{equation}
on $\mathbb{T}^N$ for a strongly connected communication graph $(\mathcal{V},\mathcal{E})$, where each $f_{ki}$ is odd, $2\pi$-periodic, continuously differentiable on $(-\pi,\pi)$, and satisfies $f_{ki}(0)=0$, $f'_{ki}(\alpha)>0$ for all $\alpha\in(-\pi,\pi)$. The synchronization manifold is given by
\begin{equation}
\mathcal{M}_{\operatorname{sync}}=\{\vartheta\in\mathbb{S}^1\times\ldots\mathbb{S}^1:\vartheta_1=\ldots=\vartheta_N\},
\end{equation}
where $\vartheta=(\vartheta_1,\ldots,\vartheta_N)^T$. The linearized dynamics is given by $\dot{\delta\vartheta} = A(\vartheta)\delta\vartheta$ with
\begin{equation}
\begin{cases}
A_{kk}(\vartheta)&=-\sum_{i:(k,i)\in\mathcal{E}}f_{ki}'(\vartheta_i-\vartheta_k), \\
A_{ki}(\vartheta)&=f_{ki}'(\vartheta_i-\vartheta_k) \quad \mathrm{if} \quad(k,i)\in\mathcal{E}, \\
A_{ki}(\vartheta)&=0 \quad \mathrm{if} \quad (k,i)\notin \mathcal{E}.
\end{cases}
\end{equation}

This linearization is in effect a linearization with respect to the standard smooth global frame on $\mathbb{T}^N$ defined by the $N$-tuple of vector fields $
\left(\frac{\partial}{\partial\vartheta^1},\cdot\cdot\cdot,\frac{\partial}{\partial\vartheta^N}\right)
$.
Note that this frame is both left-invariant and right-invariant since $\mathbb{T}^N$ is an abelian Lie group. Any tangent vector $\delta \vartheta$ can be expressed as $\delta\vartheta =\sum_{i=1}^N \delta\vartheta^i \frac{\partial}{\partial\vartheta^i}\vert_{\vartheta}$ with respect to this frame.  We also associate to each tangent space the standard Euclidean inner product with respect to this frame, which equips $\mathbb{T}^N$ with a bi-invariant Riemannian metric.
The identity $A(\vartheta)\boldsymbol{1}=0$, where $\boldsymbol{1}=(1,\ldots,1)^T$ captures the invariance of the synchronization manifold $\mathcal{M}_{\operatorname{sync}}$.
The positive orthant with respect to $
\left(\frac{\partial}{\partial\vartheta^1},\cdot\cdot\cdot,\frac{\partial}{\partial\vartheta^N}\right)
$ yields an invariant polyhedral cone field $\mathcal{K}_{\mathbb{T}^N}$ on $\mathbb{T}^N$:
\begin{equation}
\mathcal{K}_{\mathbb{T}^N}(\vartheta):=\{\delta \vartheta\in T_{\vartheta}\mathbb{T}^N: \delta \vartheta^i\geq 0\},
\end{equation}
where $\delta\vartheta =\sum_{i=1}^N \delta\vartheta^i \frac{\partial}{\partial\vartheta^i}\vert_{\vartheta}$. Differential positivity of (\ref{asymmetricS1}) with respect to $\mathcal{K}_{\mathbb{T}^N}$ on $\mathbb{T}^N_{\pi}=\{\vartheta\in\mathbb{T}^N:|\vartheta_k-\vartheta_i|<\pi,\;(i,k)\in\mathcal{E}\}$ is clear by observing that on each face $F_i=\{\delta\vartheta:\delta\vartheta^i=0\}$ of the cone, we have $\dot{\delta\vartheta}^i=(A(\vartheta)\delta\vartheta)^i\geq 0$. Moreover, the differential positivity is strict in the case of a strongly connected graph and the Perron-Frobenius vector field is given by $\boldsymbol{1}_{\vartheta}=\sum_{i=1}^N 1\,\partial/\partial\vartheta^i\vert_{\vartheta}$. Also note that $A(\vartheta)\boldsymbol{1}_{\vartheta}=0$ implies that 
\begin{equation}
d\psi_t\vert_{\vartheta}\boldsymbol{1}_{\vartheta}=\boldsymbol{1}_{\psi_t(\vartheta)},
\end{equation}
where $\psi_t$ denotes the flow of the system. Thus, we have
\begin{equation}
\|d\psi_t\vert_{\vartheta}\boldsymbol{1}_{\vartheta}\|_{\psi_t(\vartheta)}=\|\boldsymbol{1}_{\psi_t(\vartheta)}\|_{\psi_t(\vartheta)} = \|dL_{\psi_t(\vartheta)}\vert_e\boldsymbol{1}_e\|_{\psi_t(\vartheta)} =\|\boldsymbol{1}_e\|_e < \infty, \quad \forall t>0,
\end{equation}
where $e$ is the identity element on the torus. Therefore, the condition 
\begin{equation}
\limsup_{t\rightarrow\infty}\|d\psi_t\vert_g w(g)\|_{\psi_t(g)}<\infty
\end{equation}
 of Theorem \ref{thm compact} is clearly satisfied in this example.

Furthermore, note that differential positivity is retained if we replace (\ref{asymmetricS1}) by the inhomogeneous dynamics
 $\dot{\vartheta}_k=\omega_k + \sum_{i:(k,i)\in\mathcal{E}}f_{ki}(\vartheta_i-\vartheta_k)$,
where $\omega_k\in\mathbb{R}$ is the natural frequency of oscillator $\vartheta_k$, since both systems have the following linearization:
\begin{equation}    \label{asymmetricS1 3}
\dot{\delta\vartheta}_k= \sum_{i:(k,i)\in\mathcal{E}}f'_{ki}(\vartheta_i-\vartheta_k)\delta\vartheta_i- \sum_{i:(k,i)\in\mathcal{E}}f'_{ki}(\vartheta_i-\vartheta_k)\delta\vartheta_k.
\end{equation}
We thus arrive at the following general result.
\begin{theorem}   \label{general thm}
Consider $N$ agents $\vartheta_k\in\mathbb{S}^1$ communicating via a strongly connected graph $\mathcal{G}=(\mathcal{V},\mathcal{E})$ according to
\begin{equation}  \label{protocol}
\dot{\vartheta}_k=\omega_k + \sum_{i:(k,i)\in\mathcal{E}}f_{ki}(\vartheta_i-\vartheta_k),
\end{equation}
where each $f_{ki}$ is an odd, $2\pi$-periodic coupling function that is continuously differentiable on $(-\pi,\pi)$, and satisfies $f_{ki}(0)=0$, $f'_{ki}(\alpha)>0$ for all $\alpha\in(-\pi,\pi)$. Let $S$ be a bounded, connected, and forward invariant region such that
\begin{equation}
S\subset \{\vartheta\in\mathbb{T}^N:|\vartheta_k-\vartheta_i|\neq \pi, \; \forall (k,i)\in\mathcal{E}\}.
\end{equation}
Then, every trajectory from $S$ asymptotically converges to a unique integral curve of 
\begin{equation}  \label{int sync}
\boldsymbol{1}_{\vartheta}=\sum_{i=1}^N 1\frac{\partial}{\partial\vartheta^i}\big\vert_{\vartheta}.
\end{equation}
\end{theorem}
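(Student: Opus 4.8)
The plan is to obtain Theorem~\ref{general thm} as an instance of Theorem~\ref{thm compact} (equivalently, the $k=1$ case of Theorem~\ref{thm compact2}), so the work reduces to verifying its three hypotheses on the region $S$: uniform strict differential positivity with respect to a left-invariant cone field of closed, convex, and pointed cones; left-invariance of the Perron--Frobenius vector field $w$; and finiteness of $\limsup_{t\to\infty}\|d\psi_t\vert_\vartheta\, w(\vartheta)\|_{\psi_t(\vartheta)}$. First I would fix the cone field to be the positive orthant $\mathcal{K}_{\mathbb{T}^N}$ relative to the standard global frame $(\partial/\partial\vartheta^1,\dots,\partial/\partial\vartheta^N)$; since $\mathbb{T}^N$ is abelian this frame is bi-invariant, so $\mathcal{K}_{\mathbb{T}^N}$ is an invariant cone field and the associated Euclidean inner product is a bi-invariant Finsler metric. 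The natural frequencies $\omega_k$ drop out of the linearization (equation~(\ref{asymmetricS1 3})), so it suffices to analyse $\dot{\delta\vartheta}=A(\vartheta)\delta\vartheta$ with the displayed $A(\vartheta)$, which is a Metzler matrix (nonnegative off-diagonal entries $f'_{ki}(\vartheta_i-\vartheta_k)\geq 0$ on the edges) satisfying $A(\vartheta)\boldsymbol{1}=0$.

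Differential positivity then follows from the standard boundary argument already sketched in Section~\ref{diff1}: on each face $F_i=\{\delta\vartheta:\delta\vartheta^i=0\}$ of $\mathcal{K}_{\mathbb{T}^N}$ one has $\dot{\delta\vartheta}^i=(A(\vartheta)\delta\vartheta)^i=\sum_{(i,j)\in\mathcal{E}}f'_{ij}(\vartheta_i-\vartheta_j)\delta\vartheta^j\geq 0$ (the diagonal term vanishes there), so the prolonged flow never carries a cone vector across $\partial\mathcal{K}_{\mathbb{T}^N}$. The core technical step is upgrading this to \emph{uniform strict} differential positivity on $S$. Here I would use that $\overline{S}$ is compact and, by the standing assumption, stays a positive distance from every configuration with $|\vartheta_k-\vartheta_i|=\pi$; together with continuity and strict positivity of each $f'_{ki}$ on $(-\pi,\pi)$ this yields a uniform bound $f'_{ki}(\vartheta_i-\vartheta_k)\geq\delta>0$ on $\overline{S}$. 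Thus the time-varying linearization along any trajectory in $S$ is a row-sum-zero Metzler system with edge weights uniformly bounded below over a strongly connected (hence uniformly connected) graph, and a Moreau-type Hilbert-metric contraction estimate for such systems \cite{Moreau2004} shows that after a fixed horizon $T$ the transition map sends $\mathcal{K}_{\mathbb{T}^N}(\vartheta)\setminus\{0\}$ into the interior of a uniformly smaller cone at $\psi_t(\vartheta)$, which is exactly the required uniform strict differential positivity. I expect this step to be the main obstacle, as it amounts to transporting the uniform contraction theory of consensus from the state dynamics to the variational dynamics while tracking the dependence on $S$.

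Finally I would identify the Perron--Frobenius vector field. Since $A(\vartheta)\boldsymbol{1}=0$ for every $\vartheta$, the constant field $\boldsymbol{1}_\vartheta=\sum_{i=1}^N 1\,\partial/\partial\vartheta^i\vert_\vartheta$ spans a one-dimensional $d\psi_t$-invariant subbundle contained in $\operatorname{int}\mathcal{K}_{\mathbb{T}^N}$; by uniqueness of the dominant direction in differential Perron--Frobenius theory \cite{Forni2015,Forni2015a} this subbundle must be the Perron--Frobenius line field, so $w(\vartheta)=\boldsymbol{1}_\vartheta$, which is manifestly left-invariant. The computation in Section~\ref{diff1}, namely $\|d\psi_t\vert_\vartheta\boldsymbol{1}_\vartheta\|_{\psi_t(\vartheta)}=\|\boldsymbol{1}_{\psi_t(\vartheta)}\|_{\psi_t(\vartheta)}=\|\boldsymbol{1}_e\|_e<\infty$, verifies the remaining $\limsup$ hypothesis. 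Theorem~\ref{thm compact} then applies on $S$ and produces a unique integral curve of $\boldsymbol{1}_\vartheta$ that attracts every trajectory from $S$ — this curve being a left translate of the diagonal subgroup $\{\vartheta_1=\dots=\vartheta_N\}\cong\mathbb{S}^1$ of $\mathbb{T}^N$ — which is precisely the assertion of Theorem~\ref{general thm}.
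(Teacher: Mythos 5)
Your proposal follows essentially the same route as the paper: the paper does not give a separate proof of Theorem~\ref{general thm} but derives it from the discussion immediately preceding it, which establishes exactly what you do --- invariance of the positive-orthant cone field in the bi-invariant frame, differential positivity via the face argument $\dot{\delta\vartheta}^i\geq 0$ on $F_i$, strictness from strong connectivity, identification of $w(\vartheta)=\boldsymbol{1}_\vartheta$ from $A(\vartheta)\boldsymbol{1}=0$, the boundedness of $\|d\psi_t\vert_\vartheta\boldsymbol{1}_\vartheta\|$, and an appeal to Theorem~\ref{thm compact}. You actually supply more detail than the paper on the upgrade to \emph{uniform} strict positivity (the compactness/Hilbert-metric step), which the paper passes over with the remark that positivity ``is strict in the case of a strongly connected graph''; the only caveat is that $S\subset\{|\vartheta_k-\vartheta_i|\neq\pi\}$ does not by itself force $\overline{S}$ to stay a positive distance from that bad set, a point the paper also leaves implicit.
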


There is a clear generalization of the above theorem to the case of time-varying consensus protocols of the same form. Uniform strict differential positivity in the time-varying case is guaranteed for a strongly connected graph that is uniformly connected over a finite time horizon if there exists some $\delta >0$ such that $f'_{ki}(\alpha,t)\geq \delta >0$ for all $\alpha\in(-\pi,\pi)$ at all times $t>0$.

\subsection*{Example: frequency synchronization}

Consider $N$ agents evolving on $\mathbb{S}^1$ and interacting via a connected bidirectional graph according to (\ref{protocol}), where in addition to the assumptions of Theorem \ref{general thm}, we assume that $f_{ki}(\alpha) \rightarrow + \infty$ as $\alpha \rightarrow \pi$ for each of the coupling functions $f_{ki}$, as in the example depicted in Figure~\ref{fig:couplings} $(a)$. Note that $f_{ki}$ and $f_{ik}$ need not be the same function. In this model, agents attract each other with a strength that is monotonically increasing on $(0,\pi)$ and grows infinitely strong as the separation between connected agents approaches $\pi$, thereby ensuring that the dynamics defines a forward invariant flow on the set 
$\mathbb{T}^N_{\pi}=\{\vartheta\in\mathbb{T}^N:|\vartheta_k-\vartheta_i|\neq \pi, \; \forall (k,i)\in\mathcal{E}\}$. Thus, by Theorem \ref{general thm}, all trajectories from any connected component $S$ of $\mathbb{T}^N_{\pi}$ will converge to a unique integral curve of $\boldsymbol{1}_{\vartheta}$ in $S$. Such an attractor corresponds to a phase-locking behavior in which the frequencies $\dot{\vartheta_i}$ synchronize to a particular frequency which is unique for any connected component of $\mathbb{T}^N_{\pi}$.

Typically, the set $\mathbb{T}^N_{\pi}$ consists of a finite number of distinct connected components which depends on the communication graph $\mathcal{G}$. Given any initial configuration in $\mathbb{T}^N_{\pi}$, the trajectory is attracted to a limit cycle corresponding to a phase-locking behavior that is unique to the particular connected component of $\mathbb{T}^N_{\pi}$ in which the configuration is found. In the particular case of a tree graph, the set $\mathbb{T}^N_{\pi}$ is itself connected since given any point there exists a continuous path in $\mathbb{T}^N_{\pi}$ connecting each vertex to its parent and thus any point to any element of the synchronization manifold $\mathcal{M}_{\operatorname{sync}}$. Therefore, in the case of a tree graph, for instance, we see that there exists a unique integral curve of the Perron-Frobenius vector field $\boldsymbol{1}_{\vartheta}$ that is an attractor for every point in $\mathbb{T}^N_{\pi}$. That is, we achieve almost global frequency synchronization. The same result holds for any communication graph which results in a connected $\mathbb{T}^N_{\pi}$. If the frequencies $w_k=0$ for all $k$, then this would correspond to synchronization of the agents to a point on $\mathcal{M}_{\operatorname{sync}}$.

\subsection*{Example: formation control} Consider $N$ agents evolving on $\mathbb{S}^1$ and interacting via a connected bidirectional graph according to $\dot{\vartheta}_k=\omega_k + \sum_{i:(k,i)\in\mathcal{E}}f_{ki}(\vartheta_i-\vartheta_k)$,
where each coupling function $f_{ki}$ is odd, $2\pi$-periodic and differentiable on $(0,2\pi)$, and satisfies $f_{ki}(\pi)=0$, $f'_{ki}(\alpha)>0$ for $\alpha \in (0,2\pi)$ and $f_{ki}(\alpha)\rightarrow -\infty$ as $\alpha \rightarrow 0^+$ as in the example depicted in Figure~\ref{fig:couplings} $(b)$. 
In this model, all agents $\vartheta_k$ repel each other with a strength that monotonically decreases on $(0,\pi)$ and grows infinitely strong as the separation between any pair of connected agents approaches $0$. 
Thus, it is clear that the set 
$\mathcal{U}=\{\vartheta\in\mathbb{T}^N:|\vartheta_k-\vartheta_i|\neq0,\; i,k=1,\cdot\cdot\cdot, N\}$,
is forward invariant. Strict differential positivity on $\mathcal{U}$ ensures that all trajectories from any connected component $S$ of $\mathcal{U}$ converge to a unique integral curve of $\boldsymbol{1}\vert_{\vartheta}$ in $S$. In this model, all attractors of the system correspond to balanced formations. 

\begin{figure}
\centering
\includegraphics[width=0.8\linewidth]{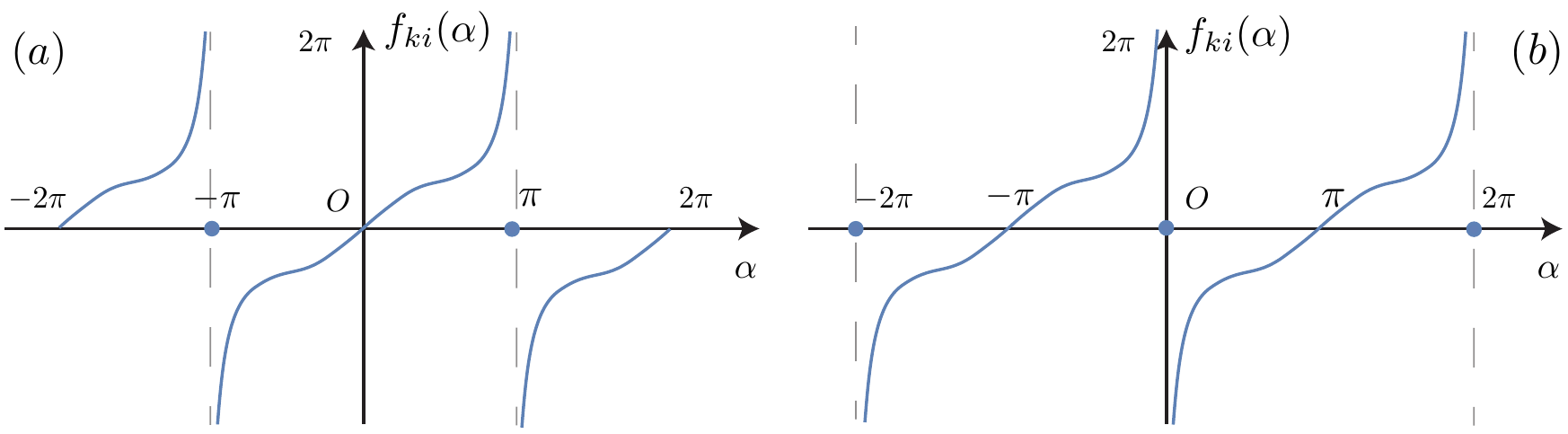}
  \caption{$(a)$ An example of a type of coupling function on $\mathbb{S}^1$ which ensures convergence to a unique limit cycle arising as an integral curve of the Perron-Frobenius vector field $\boldsymbol{1}\vert_{\vartheta}$ in any connected component of $\mathbb{T}^{N}_{\pi}$.
   $(b)$ An example of a type of coupling function that can be used to achieve convergence to balanced formations on $\mathbb{S}^1$.
  }
  \label{fig:couplings}
\end{figure}

\begin{remark}
It should be noted that it may generally be difficult to determine the forward invariant regions $S$ of Theorem \ref{general thm} on which strict differential positivity holds. In the preceding frequency synchronization and formation control examples, we have characterized the forward invariant sets by the use of barrier functions, which effectively split the $N$-torus into a collection of such  sets that is determined by the topology of the communication graph.
\end{remark}

\subsection{The symmetric setting}

In this section we consider consensus protocols on $\mathbb{S}^1$ for undirected graphs and a coupling function $f$ that is independent of the communication edge $(k,i)\in\mathcal{E}$. In this case, the resulting dynamics can be formulated as a gradient system on $G$ and studied using standard tools such as quadratic Lyapunov theory. Here we show that a very similar analysis can be performed via differential positivity through invariant quadratic cone fields, and while this analysis extends to the asymmetric setting of Section \ref{diff1} by simply replacing quadratic cones with polyhedral cones, the quadratic Lyapunov approach does not offer a natural extension to the asymmetric setting.

Consider the model
$
\dot{\vartheta}_k=\sum_{i:(k,i)\in\mathcal{E}} f(\vartheta_i-\vartheta_k)$,
where the coupling function $f$ is an odd, $2\pi$-periodic, and twice differentiable function on $(-\pi,\pi)$, which satisfies $f(0)=0$, and $f'(\alpha)>0$ for all $\alpha\in(-\pi,\pi)$.
We define the cone field $\mathcal{K}_{\mathbb{T}^N}$ on the $N$-torus $\mathbb{T}^N$ by
\begin{equation} \label{consensus4}
\mathcal{K}_{\mathbb{T}^N}(\vartheta,\delta\vartheta):=\{\delta\vartheta\in T_{\vartheta}\mathbb{T}^N:\boldsymbol{1}_{\vartheta}^T\delta\vartheta\geq 0,\; Q(\vartheta,\delta\vartheta)\geq 0\},
\end{equation}
where $Q$ is the quadratic form 
\begin{align}
Q(\vartheta,\delta\vartheta)=\delta\vartheta^T\boldsymbol{1}_{\vartheta}\boldsymbol{1}_{\vartheta}^T\delta\vartheta-\mu\;\delta\vartheta^T\delta\vartheta  
\end{align}
where the constant parameter $\mu\in(0,N)$ controls the opening angle of the cone. Note that (\ref{consensus4}) is clearly a bi-invariant cone field since the defining inequalities are based on a bi-invariant frame and a bi-invariant vector field $\boldsymbol{1}_{\vartheta}$ on $\mathbb{T}^N$.
In the limiting cases, (\ref{consensus4}) defines an invariant half-space field for $\mu=0$, and an invariant ray field for $\mu=N$.

Fix $\mu\in(0,N)$. On the boundary $\partial\mathcal{K}$ of the cone $\mathcal{K}(\vartheta,\delta\vartheta)$, we have 
$
Q(\delta\vartheta):=\delta\vartheta^T\boldsymbol{1}_{\vartheta}\boldsymbol{1}_{\vartheta}^T\delta\vartheta-\mu\;\delta\vartheta^T\delta\vartheta=0
$
and so any $\delta\vartheta\in\partial\mathcal{K}\setminus\{0\}$ satisfies the relation
\begin{equation}
\mu=\frac{\delta\vartheta^T\boldsymbol{1}_{\vartheta}\boldsymbol{1}_{\vartheta}^T\delta\vartheta}{\delta\vartheta^T\delta\vartheta}=\frac{\left(\delta\vartheta_1+ \cdot\cdot\cdot + \delta\vartheta_N\right)^2}{\delta\vartheta_1^2 + \cdot\cdot\cdot + \delta\vartheta_N^2}\geq 0.
\end{equation}
Using $A(\vartheta)\boldsymbol{1}_{\vartheta}=0$, we see that the time derivative of $Q(\delta\vartheta)$ along the trajectories of the variational dynamics satisfies
\begin{align}
{d\over dt} Q(\delta\vartheta)=-\mu\;\delta\vartheta^T(A(\vartheta)^T+A(\vartheta))\delta\vartheta &= -2\mu\;\delta\vartheta^T A(\vartheta)\delta\vartheta \nonumber \\
&=\mu\sum_{(i,j)\in\mathcal{E}} f'(\vartheta_i-\vartheta_j)\;(\delta\vartheta_i-\delta\vartheta_j)^2  \nonumber \\
&=\frac{\left(\delta\vartheta^T\boldsymbol{1}_{\vartheta}\right)^2}{\delta\vartheta^T\delta\vartheta}\sum_{(i,j)\in\mathcal{E}} f'(\vartheta_i-\vartheta_j)\;(\delta\vartheta_i-\delta\vartheta_j)^2
\end{align}
for $\delta\vartheta\in\partial\mathcal{K}\setminus\{0\}$. Thus, we see that on the boundary $Q(\delta\vartheta)=0$, we have ${d\over dt} Q(\delta\vartheta)>0$ for all
$
\vartheta\in\mathbb{T}^N_{\pi}=\{\vartheta\in\mathbb{T}^N:|\vartheta_k-\vartheta_i|<{\pi},\, i,k=1,\ldots,N\}$.
That is, for $\vartheta\in\mathbb{T}^N_{\pi}$, the system is strictly differentially positive with respect to the invariant cone field $\mathcal{K}_{\mathbb{T}^N}$. Note that the invariant vector field 
$
w(\vartheta)={1\over\sqrt{N}}\boldsymbol{1}_{\vartheta}\in\operatorname{int}\mathcal{K}_{\mathbb{T}^N}(\vartheta,\delta\vartheta)
$
 clearly satisfies the conditions in Theorem \ref{thm compact}.

\begin{figure}
\centering
\includegraphics[width=0.7\linewidth]{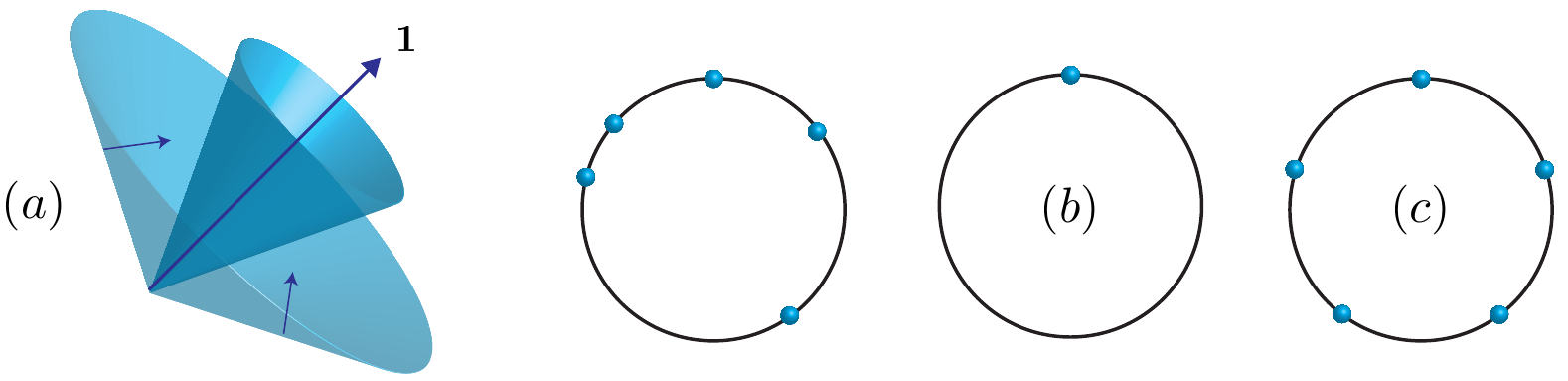}
  \caption{$(a)$ Strict differential positivity of $\dot{\vartheta}_k=\sum_{i:(k,i)\in\mathcal{E}} f(\vartheta_i-\vartheta_k)$ on $\mathbb{T}^N_{\pi}$ with respect to the invariant cone field $\mathcal{K}_{\mathbb{T}^N}$ for any choice of the parameter $\mu$. $(b)$ A synchronized state with $\vartheta\in\mathcal{M}_{\operatorname{sync}}$, and $(c)$ a balanced splay configuration for a network of $N=5$ agents evolving on $\mathbb{S}^1$.
  }
  \label{fig:balanced}
\end{figure}

It follows that the modified consensus dynamics, where the $2\pi$-periodic coupling function $f$ is strictly increasing on $(-\pi,\pi)$ and discontinuous at $\pi$,
is almost everywhere strictly differentially positive on $\mathbb{T}^N$, and thus almost globally monotonically asymptotically convergent to an attractor that arises as an integral curve of the Perron-Frobenius vector field $w$. However, since the dynamics is almost everywhere strictly differentially positive on $\mathbb{T}^N$,  and not everywhere strictly differentially positive due to the discontinuity of the coupling function at $\pi$, the attractor for the system may not be unique. For any given bounded, connected, and forward invariant set in $\mathbb{T}^N$, Theorem \ref{thm compact} establishes convergence to a unique limit cycle contained in the set. However, due to the discontinuity of the coupling function at $\pi$, both the synchronized and balanced configurations could be stable attractors for different bounded, connected, and forward invariant sets.
In particular, almost every trajectory will converge monotonically to either a synchronized state given by the Perron-Frobenius curve through the identity element $e\in\mathbb{T}^N$, or to a balanced configuration arising as the Perron-Frobenius integral curve through a point corresponding to a balanced state. For example, a balanced configuration for a consensus model on $\mathbb{S}^1$ with a complete communication graph is given by $\vartheta_k=2\pi k/N$. See Figure~\ref{fig:balanced} for an illustration of synchronized and balanced configurations in the $N=5$ case.

\section{Differential positivity and consensus on $SO(3)$}  \label{SO(3)}

Let $G$ be a compact Lie group with a bi-invariant Riemannian metric. Consider a network of $N$ agents $g_k$ represented by an undirected connected graph $\mathcal{G}=(\mathcal{V},\mathcal{E})$ evolving on $G$. For a given element $g_k\in G$, the Riemannian exponential and logarithm maps are denoted by $\exp_{g_k}:T_{g_k}G\rightarrow G$ and $\log_{g_k}:U_{g_k}\rightarrow T_{g_k}G$, respectively, where $U_{g_k}\subset G$ is the maximal set containing $g_k$ for which $\exp_{g_k}$ is a diffeomorphism. For any communication edge $(k,i)\in\mathcal{E}$ define 
\begin{equation}
\theta_{ki}=d(g_k,g_i) \quad \operatorname{and} \quad u_{ki}=\frac{\log_{g_k}g_i}{\|\log_{g_k}g_i\|},
\end{equation}
where $d$ denotes the Riemannian distance on $G$. Let $\operatorname{inj}(G)$ denote the injectivity radius of $G$.
A class of consensus protocols can be defined on $G$ by
\begin{equation}  \label{4protocol}
\dot{g}_k=\sum_{i:(k,i)\in\mathcal{E}}f(\theta_{ki})\;u_{ki},
\end{equation}
where $f:[0,\operatorname{inj}(G)]\rightarrow \mathbb{R}$ is a suitable reshaping function that is differentiable on $(0,\operatorname{inj}(G))$ and satisfies $f(0)=0$.

Here we consider a network of $N$ agents evolving on the space of rotations $SO(3)$.  Associate to each agent a state $g_k\in SO(3)$, where 
\begin{equation}
SO(3)=\{R\in\mathbb{R}^{3\times 3}:R^TR=I, \; \det{R}=1\},
\end{equation}
and $e=I$ denotes the identity element and matrix in $SO(3)$. The Lie algebra of $SO(3)$ is the set of $3\times 3$ skew symmetric matrices, and is denoted by $\mathfrak{so}(3)$. For any tangent vector $\delta g_k\in T_{g_k}G$, there exist $\omega_1,\omega_2,\omega_3\in\mathbb{R}^3$ such that 
\begin{equation}
\delta g_{k}= g_k
\begin{pmatrix}
0 & -\omega_3 & \omega_2 \\
\omega_3 &  0 & -\omega_1 \\
-\omega_2 & \omega_1 & 0
\end{pmatrix}=g_{k}\Omega,
\end{equation}
where $\Omega\in\mathfrak{so}(3)$. We can thus identify any 
$\delta g_k\in T_{g_k}G$ with a vector $\Omega^{\vee}=(\omega_1,\omega_2,\omega_3)^T\in\mathbb{R}^3$  via the $^{\vee}$ map. 

For simplicity, assume that $SO(3)$ is equipped with the standard bi-invariant metric characterized by $\langle \Omega_1,\Omega_2 \rangle_{\mathfrak{so}(3)}=\left(\Omega_1^{\vee}\right)^T\Omega_2^{\vee}$, for $\Omega_1,\Omega_2\in\mathfrak{so}(3)$. Consider a consensus protocol of the form 
\begin{equation}  \label{consensus example}
\dot{g}_k=g_{k}\Omega_k+\sum_{i:(k,i)\in\mathcal{E}}f(\theta_{ki})u_{ki},
\end{equation}
where each $\Omega_k\in \mathfrak{so}(3)$ is a constant left-invariant `intrinsic velocity' associated to agent $g_k$, and
 the reshaping function $f:[0,\pi]\rightarrow\mathbb{R}$ is differentiable on $(0,\pi)$, satisfies $f(0)=0$ and $f'(\theta)>0$ for $\theta\in(0,\pi)$. Thus, in the absence of the flow of communication between the agents, each agent would evolve according to
 \begin{equation}
 g_k(t)=g_k(0)e^{t\Omega_k}.
 \end{equation}
 
Let $\nabla$ denote the left Cartan connection on $SO(3)$. We arrive at the linearization of (\ref{consensus example})  with respect to a left-invariant frame by considering
\begin{equation}
\nabla_{\delta g_k}\left(g_k\Omega_k+\sum_{i:(k,i)\in\mathcal{E}}f(\theta_{ki})u_{ki}\right),
\end{equation}
which is a measure of the change in $g_k\Omega_k+\sum_{i:(k,i)\in\mathcal{E}}f(\theta_{ki})u_{ki}$ when $g_k$ changes infinitesimally in the direction of $\delta g_k\in T_{g_k}G$. As $g_k\Omega_k$ is a left-invariant vector field and $\nabla$ is the left Cartan connection, it immediately vanishes for each $k$.

Let $B(g_k)$ be a geodesic ball centered at $g_k\in G$ and $r:B(g_k)\rightarrow \mathbb{R}$ the function returning the geodesic distance to $g_k$, and $\gamma: [0,\theta_{ki}]\rightarrow B(g_k)$ be the unit speed geodesic connecting $g_k$ to $g_i\in B(g_k)$. 
The term corresponding to the communication edge $(k,i)\in\mathcal{E}$ in the 
linearization can be calculated by considering the expression $\nabla_{\xi}(f(r)\frac{\partial}{\partial r})$, where $\xi \in T_{g_i}G$ and $\frac{\partial}{\partial r}$ denotes the normalized radial vector field in normal coordinates centered at $g_k$. Decomposing $\xi$ into $\xi=\xi^{\parallel}+\xi^{\perp}$, where $\xi^{\parallel}$ is parallel to $\frac{\partial}{\partial r}$ and $\xi^{\perp}$ is orthogonal to it, we obtain by the properties of affine connections:
\begin{align}
\nabla_{\xi}\left(f(r)\frac{\partial}{\partial r}\right) &= \xi(f(r))\frac{\partial}{\partial r}+f(r)\nabla_{\xi^{\parallel}}\frac{\partial}{\partial r}+f(r)\nabla_{\xi^{\perp}}\frac{\partial}{\partial r} \\
& = \xi(f(r))\frac{\partial}{\partial r}+f(r)\nabla_{\xi^{\perp}}\frac{\partial}{\partial r} \\
&= f'(r)\xi^{\parallel}+f(r)\nabla_{\xi^{\perp}}\frac{\partial}{\partial r}, \label{SO(3) lin1}
\end{align}
where in the second line we have used the fact that $\partial/\partial r$ is tangent to unit speed geodesics emanating from $g_k$ to write $\nabla_{\partial/\partial r}\partial/\partial r =0$, and thus $\nabla_{\xi^{\parallel}}\frac{\partial}{\partial r}=0$. 

Now let $\zeta$ be a curve in the geodesic sphere $S_{ki}:=\{g\in G: r(g)=\theta_{ki}\}$ with $\zeta(0)=g_i$ and $\zeta'(0)=\xi^{\perp}$. In normal coordinates centered at $g_k$, consider a smooth geodesic variation $\Gamma:(-\epsilon,\epsilon)\times [0,\theta_{ki}]\rightarrow G$ for some $\epsilon >0$ with $\Gamma(0,t)=\gamma(t)$ and $\Gamma(s,\theta_{ki})=\zeta(s)$. In terms of the vector fields $\partial_t\Gamma(s,t)$ and $\partial_s\Gamma(s,t)$, the corresponding Jacobi field \cite{Grove1974} $J:G\rightarrow TG$ takes the form
$J(t)=\partial_s\Gamma(0,t)$. Using the definition of the torsion tensor in (\ref{torsion def}) and 
$[\partial_t\Gamma,\partial_s\Gamma](s,t)=d\Gamma\vert_{(s,t)}([\partial/\partial t,\partial/\partial s])=0$ at the point $g_i$, we find that
\begin{align}
\nabla_{\xi^{\perp}}\frac{\partial}{\partial r}&=\nabla_{\partial_s\Gamma}\partial_t\Gamma(0,r) = 
\nabla_{\partial_t\Gamma}\partial_s\Gamma(0,r) + T(\partial_s\Gamma(0,r),\partial_t\Gamma(0,r)) \\
&= \nabla_{\xi^{\parallel}}J(r)+T(J,\xi^{\parallel}),  \label{SO(3) lin2}
\end{align}
where $T$ is the torsion tensor of the left Cartan connection on $SO(3)$.

Now the Jacobi field equation of an affine connection $\nabla$ with torsion $T$ is given by
\begin{equation} \label{Jacobi eq}
\nabla_{\gamma'}\nabla_{\gamma'}J+\nabla_{\gamma'}\left(T(J,\gamma')\right)+R(J,\gamma')\gamma'=0,
\end{equation}
where $\gamma'$ denotes the tangential field of the geodesic $\gamma$ \cite{Grove1974}. Since the torsion is covariantly constant for the left Cartan connection, we have
\begin{equation}
\nabla_{\gamma'}\left(T(J,\gamma')\right)= T(\nabla_{\gamma'}J,\gamma')+T(J,\nabla_{\gamma'}\gamma'),
\end{equation}
and the second term $T(J,\nabla_{\gamma'}\gamma')=0$ since $\gamma$ is an affine geodesic. Thus, noting that the curvature of the left Cartan connection is null, we find that the Jacobi field equation (\ref{Jacobi eq}) reduces to 
\begin{equation}
\nabla_{\gamma'}\nabla_{\gamma'}J+T(\nabla_{\gamma'}J,\gamma') = 0.
\end{equation}
Describing the Jacobi field along the geodesic $\gamma$ generated by $\gamma'(0)$ through a map $J:\mathbb{R}\rightarrow \mathfrak{g}$ and left translation along $\gamma$, we obtain:
\begin{equation}
J''(t)+[\gamma'(0),J'(t)]=0.
\end{equation}
We form an orthonormal basis $\{\boldsymbol{u}_1,\boldsymbol{u}_2,\boldsymbol{u}_3\}$ of
 $T_{g_i}G$, where $\boldsymbol{u}_1= \xi^{\parallel}/\|\xi^{\parallel}\|$ and uniquely extend it to a left-invariant frame on $G$.
The Jacobi field equation along the geodesic with unit tangent $\partial/\partial r$ now takes the form
\begin{equation} \label{Jacobi 3}
\begin{cases}
J_1''(t)= 0, \\
J_2''(t)-J_3'(t) = 0, \\
J_3''(t) + J_2'(t) = 0,
\end{cases}
\end{equation}
where $J_i(t)$ denote the components of $J(t)$ with respect to this left-invariant frame. Note that we have used the Lie bracket on $SO(3)$ to derive the form of the equations in (\ref{Jacobi 3}).
It can easily be verified that the solution to this system subject to $J(0)=0$ and $J(r)=\xi^{\perp}=\xi^{\perp}_2\boldsymbol{u}_2+\xi^{\perp}_3\boldsymbol{u}_3$ satisfies $J_1(r)=0$ and 
\begin{align}
J_2'(r)-J_3(r)&=\frac{1}{2}\cot\left(\frac{r}{2}\right) \xi_2^{\perp} + \frac{1}{2}\xi_3^{\perp} \\
J_3'(r)+J_2(r) &= -\frac{1}{2}\xi_2^{\perp} + \frac{1}{2}\cot\left(\frac{r}{2}\right)\xi_3^{\perp}.
\end{align}
Substitution into (\ref{SO(3) lin2}) and (\ref{SO(3) lin1}) finally yields
$\nabla_{\xi}(f(r)\partial/\partial r)=\hat{A}(r)\xi$, where $\hat{A}(r)$ is a linear operator whose spectrum is given by
\begin{equation} \label{spectrum}
\bigg\{f'(r),\frac{f(r)}{2}\left(\cot\left(\frac{r}{2}\right)-i\right),\frac{f(r)}{2}\left(\cot\left(\frac{r}{2}\right)+i\right)\bigg\}.
\end{equation}
Note that the real parts of the eigenvalues of $\hat{A}(r)$ are all positive for $r\in (0,\pi)$.

Now writing $\boldsymbol{g}=(g_1,\cdot\cdot\cdot,g_N)$, the dynamical system (\ref{consensus example}) can be expressed as
$
\frac{d}{dt}\boldsymbol{g}=F(\boldsymbol{g})$,
where $F$ is a smooth vector field on $SO(3)^N$. The linearization of the system can be expressed in the form
\begin{equation}
\frac{d}{dt}\mathbf{v}=\mathcal{A}(\boldsymbol{g})\;\mathbf{v},
\end{equation}
where $\mathbf{v}\in\mathbb{R}^{3N}$ is the vector representation of 
$
(\delta g_1,\cdot\cdot\cdot,\delta g_N)\in T_{\boldsymbol{g}}SO(3)^N
$
with respect to a left-invariant orthonormal frame $\{E^l_{k}\}$ $(l=1,2,3; k=1,\cdot\cdot\cdot,N)$ on $T_{\boldsymbol{g}}SO(3)^N\cong \mathbb{R}^{3N}$ formed as the Cartesian product of $N$ copies of a given left-invariant orthonormal frame of $SO(3)$.
For each $\boldsymbol{g}$, the linear map $\mathcal{A}(\boldsymbol{g})$ has the $3N\times 3N$ matrix representation of the form
\begin{equation}
\begin{cases}
\mathcal{A}_{kk}(\boldsymbol{g})&=-\sum_{i: (k,i)\in\mathcal{E}}A(\theta_{ki}), \\
\mathcal{A}_{ki}(\boldsymbol{g})&=A(\theta_{ki}) \quad \mathrm{if} \quad(k,i)\in\mathcal{E}, \\
\mathcal{A}_{ki}(\boldsymbol{g})&=0 \quad \mathrm{if} \quad (k,i)\notin \mathcal{E}.
\end{cases}
\end{equation}
with respect to the orthonormal basis $\{E^l_k\vert_{\boldsymbol{g}}\}_{l=1,2,3; k=1,\cdot\cdot\cdot,N}$ of $\mathbb{R}^{3N}$, where $A(r)$ is the $3\times 3$ matrix representation of the linear operator $\hat{A}(r)$
with spectrum (\ref{spectrum}).

Let $\boldsymbol{1}_{1}$ denote the vector consisting of $N$ copies of $\boldsymbol{e}_1 = (1,0,0)^T$ with respect to the left-invariant frame $\{E^l_k\vert_{\boldsymbol{g}}\}$ of $T_{\boldsymbol{g}}SO(3)^N\cong \mathbb{R}^{3N}$. Similarly, define $\boldsymbol{1}_{2}$ and $\boldsymbol{1}_{3}$ using  $N$ copies of $\boldsymbol{e}_2 = (0,1,0)^T$
 and $\boldsymbol{e}_3 = (0,0,1)^T$, respectively.
 We define the left-invariant cone field $\mathcal{C}_{SO(3)^N}(\boldsymbol{g},\delta\boldsymbol{g})$ of rank 3 by 
\begin{equation}
Q(\mathbf{v}):= \mathbf{v}^T\boldsymbol{1}_{1}{\boldsymbol{1}_1}^T\mathbf{v} + \mathbf{v}^T\boldsymbol{1}_{2}{\boldsymbol{1}_2}^T\mathbf{v} + \mathbf{v}^T\boldsymbol{1}_{3}{\boldsymbol{1}_3}^T\mathbf{v} 
-\mu\;\mathbf{v}^T\mathbf{v} \geq 0,
\end{equation}
where $\mu\in (0,3N)$ is a parameter and $\mathbf{v}$ is the vector representation of $\delta\boldsymbol{g}\in T_{\boldsymbol{g}}SO(3)^N$ with respect to the left-invariant frame $\{E^l_k\vert_{\boldsymbol{g}}\}$ of $T_{\boldsymbol{g}}SO(3)^N\cong \mathbb{R}^{3N}$. Observe that 
\begin{equation}  \label{eigenspace}
\mathcal{D}_{\boldsymbol{g}}=\operatorname{span}\{\boldsymbol{1}_{1},\boldsymbol{1}_{2},\boldsymbol{1}_{3}\}\subset\operatorname{int} \mathcal{C}(\boldsymbol{g}),
\end{equation}
at every point $\boldsymbol{g}\in SO(3)^N$. Furthermore, since $\mathcal{D}_{\boldsymbol{g}}$ is a left-invariant distribution of rank $3$ that defines a Lie subalgebra of $\mathfrak{so}(3)^N$ isomorphic to $\mathfrak{so}(3)$, its integral manifolds correspond to left translations of the $3$-dimensional subgroup of $SO(3)^N$ obtained as $N$ identical copies of $SO(3)$ diagonally embedded in $SO(3)^N$. In particular, the synchronization manifold
\begin{equation}
\mathcal{M}_{\operatorname{sync}}=\{\boldsymbol{g}\in SO(3)^N: g_1= \cdot\cdot\cdot = g_N\}
\end{equation}
is the integral manifold through the identity element $\boldsymbol{e}\in\ SO(3)^N$.

Noting that $\mathcal{A}(\boldsymbol{g})\boldsymbol{1}_{j}=0$, for $j=1,2,3$, we find that the time derivative of $Q$ along trajectories of the variational dynamics takes the form
\begin{equation}
\frac{d}{dt}Q(\mathbf{v})=-2\mu\;\mathbf{v}^T\mathcal{A}(\boldsymbol{g})\mathbf{v} 
=  \mu \sum_{(k,i)\in\mathcal{E}}(\mathbf{v}_k-\mathbf{v}_i)^T A(\theta_{ki})(\mathbf{v}_k-\mathbf{v}_i) \geq 0,
\end{equation}
where $\mathbf{v}_k\in\mathbb{R}^3$ consists of the elements of $\mathbf{v}\in\mathbb{R}^{3N}$ at the entries $3k-2$, $3k-1$, $3k$. It is clear that for a connected graph $dQ/dt>0$, unless $\mathbf{v}_i=\mathbf{v}_k$ for all $i,k$. This demonstrates strict differential positivity of the consensus dynamics with respect to the cone field $\mathcal{C}$ for the monotone coupling function $f$, whenever $\theta_{ki}<\pi$.
Thus, by Theorem \ref{thm compact2}, for any bounded, connected, and forward invariant region $S\subseteq SO(3)^N$, there exists a unique integral manifold of $\mathcal{D}_{\boldsymbol{g}}$ (\ref{eigenspace}) that is an attractor for all of the trajectories from $S$. 
In particular, if $\Omega_k=0$ for all $k=1,...,N$ then the set $S=\{\boldsymbol{g}\in SO(3)^N: d(g_i,g_k) < \pi/2, \forall(i,k)\in\mathcal{E}\}$ is forward invariant and thus contains a unique attractor that is an integral of the distribution $\mathcal{D}_{\boldsymbol{g}}$. On the other hand, since $e\in S$ is an equilibrium point the attractor must be the integral manifold through $e$, which coincides with the three-dimensional synchronization manifold $\mathcal{M}_{\operatorname{sync}}\cong SO(3)$.

Imposing the additional requirement on $f$ that $f(\theta)\rightarrow \infty$ as $\theta \rightarrow \pi$ effectively splits $SO(3)^N$ into a collection of connected and forward invariant regions on which the dynamics is strictly differentially positive, regardless of the distribution of `intrinsic left-invariant velocities' $\Omega_k\in\mathfrak{so}(3)$ of the agents $g_k$. Thus, for any initial condition, the agents will asymptotically converge to a synchronized left-invariant motion in which the relative position of the agents $g_k$ are fixed. The particular asymptotic configuration of the agents and their collective left-invariant motion will be unique to the forward invariant set to which the initial condition belongs.

\section{Conclusion}

We have presented a detailed framework for studying differential positivity of discrete and continuous-time dynamical systems on Lie groups using invariant cone fields. Throughout the paper, we have illustrated the relevant concepts by applying the theory to  consensus theory, including examples involving asymmetric couplings and inhomogeneous dynamics. We have also reviewed a generalization of linear positivity theory that is obtained when one replaces the notion of a dominant eigenvector with that of a dominant eigenspace of dimension $k$. For such systems, it is natural to characterize positivity by the contraction of a cone of rank $k$ in place of a convex cone as in classical positivity theory. The corresponding generalization to nonlinear systems leads to differential positivity with respect to cone fields of rank $k$. The resulting differential Perron-Frobenius theory shows that a distribution of rank $k$ corresponding to dominant modes shapes the attractors of the system. As illustrated with an example concerning consensus on $SO(3)$, this framework can be used to study systems whose attractors arise as integral submanifolds of the distribution.

\bibliographystyle{siamplain}
\bibliography{references}

\end{document}